\newcommand{\norm}[1]{\lVert#1\rVert}
\newcommand{\Vnorm}[1]{\left\lVert#1\right\rVert}
\newcommand{\bmath}[1]{\mbox{\boldmath{$#1$}}}
\newcommand{\nn}{\nonumber}
\renewcommand{\vec}[1]{\mathbf{#1}}
\newcommand{\rd}{\mathrm{d}}
\newcommand{\dA}{\mathrm{d}A}
\newcommand{\ds}{\mathrm{d}s}
\newtheorem{thm}{Theorem}[section]
\newtheorem{prop}{Proposition}[section]
\newtheorem{lem}{Lemma}[section]
\begin{document}
\markboth{W. Bao and Q. Zhao}{An ES-PFEM for Solid-state Dewetting}

\title{An Energy-Stable Parametric Finite Element Method for Simulating Solid-state Dewetting Problems in Three Dimensions}

\author{Weizhu Bao
\thanks{Department of Mathematics, National University of Singapore, Singapore, 119076\\ Email: matbaowz@nus.edu.sg}
\and
 Quan Zhao
\thanks{Department of Mathematics, National University of Singapore, Singapore, 119076\\ Email: quanzhao90@u.nus.edu}}

\maketitle

\begin{abstract}
We propose an accurate and energy-stable parametric finite element method for solving the sharp-interface continuum model of solid-state dewetting in three-dimensional space. The model describes the motion of the film\slash vapor interface with contact line migration and is governed by the surface diffusion equation with proper boundary conditions at the contact line. We present a weak formulation for the problem, in which the contact angle condition is weakly enforced. By using piecewise linear elements in space and backward Euler method in time, we then discretize the formulation to obtain a parametric finite element approximation, where the interface and its contact line are evolved simultaneously. The resulting numerical method is shown to be well-posed and unconditionally energy-stable. Furthermore, the numerical method is generalized to the case of anisotropic surface energies in the Riemannian metric form.  Numerical results are reported to show the convergence and efficiency of the proposed numerical method as well as the anisotropic effects on the morphological evolution of thin films in solid-state dewetting.
\end{abstract}

\begin{classification}
74H15, 74S05, 74M15, 65Z99
\end{classification}

\begin{keywords}
Solid-state dewetting, surface diffusion, contact line migration, contact angle, parametric finite element method, anisotropic surface energy
\end{keywords}

\section{Introduction}
% solid-state dewetting
A thin solid film deposited on the substrate will agglomerate or dewet to form isolated islands due to surface tension\slash capillarity effects when heated to high enough temperatures but well below the thin film's melting point. This process is referred to as the solid-state dewetting (SSD)~\cite{Thompson12} since the thin film remains solid. In recent years, SSD  has been found wide applications in thin film technologies, and it is emerging as a promising route to produce well-controlled patterns of particle arrays used in sensors \cite{Mizsei93}, optical and magnetic devices \cite{Rath07}, and catalyst formations \cite{Randolph07}. A lot of experimental (e.g.,~\cite{Ye10a,Ye11a,Amram12,Rabkin14,Herz216,Naffouti16,Kovalenko17}) and theoretical efforts (e.g.,~\cite{Jiang12,Srolovitz86a,Wang15,Jiang16,Bao17,Bao17b,Dornel06,Wong00,Kim13,Kan05}) have been devoted to SSD not just because of its importance in industrial applications but also the arising scientific questions within the problem.

In general, SSD can be regarded as a type of open surface evolution problem governed by surface diffusion \cite{Mullins57} and moving contact lines \cite{Srolovitz86}. When the thin film moves along the solid substrate, a moving contact
line forms where the three phases (i.e., solid film, vapor and substrate) meet. This brings an additional kinetic feature to this problem. Recently, different mathematical models and simulation methods have been proposed to study SSD, such as sharp-interface models \cite{Srolovitz86, Wong00, Wang15, Jiang19c}, phase-field models \cite{Jiang12, Dziwnik15b, Naffouti17,Huang19b} and other models including the crystalline formulation \cite{Carter95, Zucker13}, discrete chemical potential method \cite{Dornel06} and kinetic Monte Carlo method \cite{Pierre09b}.

\begin{figure}[!htp]
\centering
\includegraphics[width=0.90\textwidth]{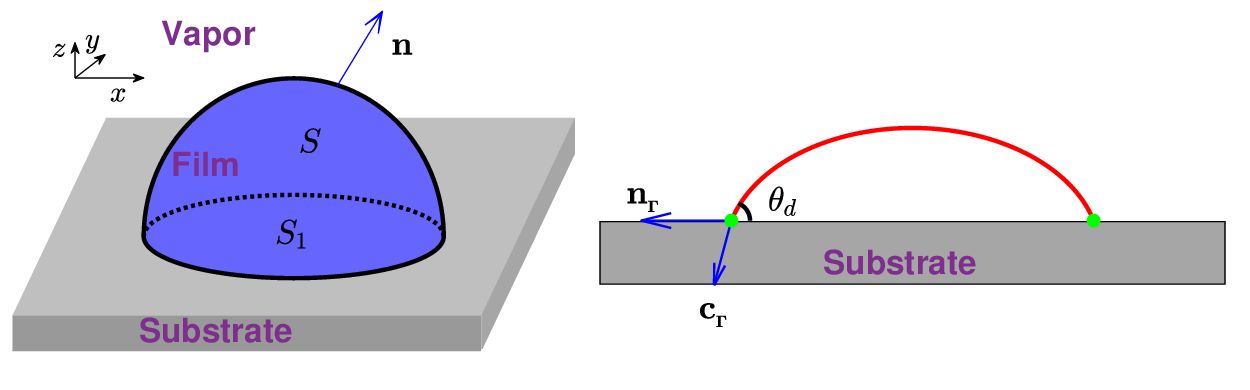}
\caption{Left panel: A geometric setup of SSD, where a thin film (shaded in blue) is deposited on a flat rigid substrate (shaded in gray), $S$ is the film/vapor interface and $S_1$ is the film/substrate interface. Right panel: configuration of the contact angle: $\theta_d=\cos^{-1}(\vec c_{_\Gamma}\cdot\vec n_{_\Gamma})$ at the contact line (green). }
\label{fig:model3d}
\end{figure}

In this work, we will restrict ourselves to the model in \cite{Jiang19c}. It is a sharp-interface model in three dimensions (3D) and was developed based on the thermodynamic variation. As illustrated in Fig.~\ref{fig:model3d}(a), we consider the case when a thin film is deposited on a flat substrate.  The evolving film/vapor interface is described by a moving open surface $S(t)$ with mapping given by (with $\vec X=(x,~y,~z)^T$ or $\vec X=(x_1,~x_2,~x_3)^T$)
 \begin{align}
 \label{eq:Interfacep}
 \vec X(\bmath{\rho},~t)=\Bigl(x_1(\bmath{\rho},t),~x_2(\bmath{\rho},~t),~x_3(\bmath{\rho},t)\Bigr)^T:\,S^0\times[0,~T]\to\mathbb{R}^3,
 \end{align}
 where $S^0=S(0)$ is the initial surface. The film/substrate interface is a flat surface, i.e., a two-dimensional moving domain and denoted by $S_1(t)$. The two interfaces intersect at the contact line and form a closed curve $\Gamma(t):=S(t)\cap S_1(t)$. We assume $\Gamma(t)$ is a simple closed curve and positively orientated with the mapping given by: $ \Gamma(t):=\vec X_{_\Gamma}
(\bmath{\rho},~t),\;\bmath{\rho}\in\Gamma^0=\Gamma(0).$ 

Some relevant geometric parameters are defined as follows: $\vec n$ and $\mathcal{H}$ are the unit outward normal vector and mean curvature of $S(t)$, respectively;  
$\vec c_{_\Gamma}$ and $\vec n_{_\Gamma}$ represent the outward unit conomral vector of $S(t)$ and $S_1(t)$, respectively, and $\nabla_s$ is the surface gradient operator defined in \eqref{eq:sfoperator}. The sharp-interface model of SSD in 3D can be stated as \cite{Jiang19c}:
\begin{subequations}
 \label{eqn:model}
 \begin{align}
 \label{eq:model1}
 &\partial_t\vec X = \Delta_{s}\mathcal{H}\;\vec n,\\
 &\mathcal{H} = -\left(\Delta_{s}{\vec X}\right)\cdot\vec n,
 \label{eq:model2}
 \end{align}
\end{subequations}
where $\Delta_{s}=\nabla_{s}\cdot\nabla_{s}$ is the surface Laplacian operator. The above equations are supplemented with the following conditions at $\Gamma(t)$:

\begin{subequations}\label{eqn:bd}
(i) The contact line condition
 \begin{align}
 \label{eq:bd1}
 x_3(\cdot,~t)|_{_\Gamma} = 0,\qquad t \geq 0.
 \end{align}

(ii) The relaxed contact angle condition
 \begin{align}
 \label{eq:bd2}
 \partial_t\vec X_{_\Gamma} = -\eta \left(\vec c_{_\Gamma}\cdot\vec n_{_\Gamma} - \cos\theta_Y\right)\,\vec n_{_\Gamma},\qquad t \geq 0.
 \end{align}

(iii) The zero-mass flux condition
\begin{align}
\label{eq:bd3}
\left.\left(\vec c_{_\Gamma}\cdot\nabla_{s} \mathcal{H}\right)\right|_{_\Gamma} = 0,\qquad t \geq 0.
\end{align}
\end{subequations}
Here $\theta_Y$ is the Young's equilibrium angle, and $\eta>0$ is the contact line mobility which controls the relaxation rate of the dynamical contact angles to the equilibrium contact angle. Condition (i) ensures that the contact line always move along the substrate surface. When $\eta\to\infty$, condition (ii) collapses to the well-known Young's equation. Condition (iii) implies that there is no mass-flux at the contact line thus the total mass/volume of the thin film is conserved.

The total free energy of the system consists of the film/vapor interfacial energy and the substrate surface energy:
\begin{align}
\label{eq:Totalenergy}
W(t):=|S(t)|-\cos\theta_Y\,|S_1(t)|,
 \end{align}
where $|S(t)|$ and $|S_1(t)|$ denote the surface area of $S(t)$ and $S_1(t)$, respectively. Let $\Omega(t)$ be the region enclosed by $S(t)$ and $S_1(t)$, then the dynamic system obeys the conservation law for the total volume (mass) and the dissipation law for the total surface energy \cite{Jiang19c}
\begin{subequations}
\begin{align}
&\frac{\rd }{\rd t} |\Omega(t)| =\int_{S(t)}\Delta_{s} \mathcal{H} \,\dA \equiv 0,\qquad t \geq 0,\\
&\frac{\rd}{\rd t}W(t) = -\int_{S(t)}\Vnorm{\nabla_{s} \mathcal{H}}^2\;\dA - \eta\int_{\Gamma(t)}\left(\vec c_{_\Gamma}\cdot\vec n_{_\Gamma} - \cos\theta_Y\right)^2\;\ds  \leq 0,
\end{align}
\end{subequations}
where $\norm{\cdot}$ is the Euclidean norm in $\mathbb{R}^3$.

There exist several numerical methods for simulating interface evolution under surface diffusion as well as its applications in SSD. The main difficulty of the problem arises from the complexity of the high-order and nonlinear governing equations and the possible deterioration of the interface mesh during numerical implementation. Therefore, most front tracking methods,
no matter in the framework of finite difference \cite{Wong00, Du10, Wang15,Mayer01} or finite element method \cite{Bansch05,Hausser07, Pozzi08}, generally have to
introduce mesh regularization/smoothing algorithms or artificial tangential velocities to prevent the mesh distortion. By reformulating \eqref{eqn:model}
into a mixed-type formulation as
\begin{subequations}
 \label{eqn:modelnew}
 \begin{align}
 \label{eq:modelnew1}
 &\vec n\cdot \partial_t\vec X = \Delta_{s}\mathcal{H},\\
 &\mathcal{H}\,\vec n = -\Delta_{s}{\vec X},
 \label{eq:modelnew2}
 \end{align}
\end{subequations}
Barrett et al.~\cite{Barrett08JCP, Barrett20} introduced a new variational formulation and designed an elegant semi-implicit parametric finite element method (PFEM) for the surface diffusion equation. The PFEM enjoys a few important and valuable properties including unconditional stability, energy dissipation, and asymptotic mesh equal distribution. It has been successfully extended for solving anisotropic surface diffusion flow under a specific form of convex anisotropies in Riemannian metric form, for simulating the evolution of coupled surface with grain boundary motions and triple junctions~\cite{Barrett08Ani, Barrett10cluster, Barrett10}. Recently, the PFEM has been extended for solving the sharp interface models of SSD in both 2D and 3D \cite{Bao17, Jiang19a, Zhao19b}. However, in those extensions of the PFEM for SSD, they evolve the motions of the interface and the contact lines separately, and do not make full use of the variational structure of the SSD problem. The stability condition depends strongly on the mesh size and the contact line mobility. The convergence rate in space deteriorates and reduces to only first-order.

Motivated by our recent work in 2D~\cite{Zhao20}, the main aim of this work is to propose a new variational formulation and to design an energy-stable parametric finite element method (ES-PFEM) for the 3D SSD problem \eqref{eqn:model} with the boundary conditions \eqref{eq:bd1}-\eqref{eq:bd3}. We first reformulate the relaxed contact angle condition \eqref{eq:bd2} into a Robin-type boundary condition such that it can be naturally absorbed into the weak formulation. This novel treatment helps to maintain the unconditional energy stability of the fully discretized scheme. Furthermore, we extend our ES-PFEM for solving the SSD problem with Riemannian metric type anisotropic surface energies, where the anisotropy is formulated as sums of weighted vector norms \cite{Barrett08Ani}. We report the convergence rate of our ES-PFEM and also investigate the anisotropic effects in SSD via different numerical setups.

The rest of the paper is organized as follows. In section~\ref{sec:weakform}, we present the weak formulation and show that it satisfies the mass conservation and energy dissipation. In section \ref{sec:pfem}, we propose an ES-PFEM as the full discretization of the formulation and show the well-posedness and unconditional energy stability of the numerical method. Subsequently, we extend our numerical method for solving the model of SSD with Riemannian metric type anisotropic surface energies in section \ref{sec:aniso}. Numerical results are reported with convergence test and some applications in section \ref{sec:numr}. Finally, we draw some conclusions in section \ref{sec:con}.

\section{A weak formulation}
\label{sec:weakform}
In this section, we present a weak formulation for the sharp interface model of SSD in
\eqref{eqn:modelnew} (and thus \eqref{eqn:model}) with boundary conditions \eqref{eq:bd1}-\eqref{eq:bd3}, and show the mass conservation and energy dissipation within the weak formulation.

\subsection{The formulation}
\label{ssec:formulation}

We define the function space $L^2(S(t))$ by
\begin{align}
L^2(S(t)):=\bigl\{\psi:S(t)\to\mathbb{R},\quad \int_{S(t)}\psi^2\,\dA<\infty\bigr\},
\end{align}
equipped with the $L^2$-inner product over $S(t)$
\begin{align}
\bigl(u,~v\bigr)_{S(t)}:=\int_{S(t)}u\,v\;\dA,\quad u,v\in L^2(S(t)),
\end{align}
and the associated $L^2$-norm $\Vnorm{u}_{S(t)}:=\sqrt{\left(u,~u\right)_{S(t)}}$. The Sobolev space $H^1(S(t))$ can be naturally defined as
\begin{align}
H^1(S(t)):=\Bigl\{ \psi\in L^2(S(t)),\;{\rm and}\; \underline{D}_i\psi\in L^2(S(t)), i=1,2,3\Bigr\},
\end{align}
where $\underline{D}_if$ is the derivative in weak sense. On the boundary $\Gamma(t)$, we define
\begin{align}
\bigl(u,~v\bigr)_{\Gamma(t)} = \int_{\Gamma(t)}u\,v\,\ds.
\end{align}

The interface velocity of $S(t)$ is given by
\begin{align}
\label{eq:velocity}
\bmath{v}(\vec X(\bmath{\rho},t),~t) = \partial_t\vec X(\bmath{\rho},t),\qquad \forall\vec X:=\vec X(\bmath{\rho},t)\in S(t).
\end{align}
We define the function space for $\bmath{v}$ as
\begin{align}
\mathbb{X}:=H^1(S(t))\times H^1(S(t))\times H^1_0(S(t)),
\end{align}
where the third component of the velocity on $\Gamma(t)$ is zero in time. Multiplying a test function $\psi\in H^1(S(t))$ to \eqref{eq:modelnew1}, integrating over $S(t)$, using integration by parts and the zero-mass flux condition \eqref{eq:bd3},
we obtain
\begin{align}
\bigl(\bmath{v}\cdot\vec n,~\psi\big)_{S(t)} + \bigl(\nabla_{s}\mathcal{H},~\nabla_{s}\psi\bigr)_{S(t)}=0.
\end{align}
Besides, we note the following two equations hold
\begin{subequations}
\label{eqn:im}
\begin{align}
\label{eq:im1}
\vec c_{_\Gamma}\cdot\vec n_{_\Gamma} &= - \frac{1}{\eta}\left(\bmath{v}\cdot\vec n_{_\Gamma}\right)|_{\Gamma(t)} + \cos\theta_Y,\\
\vec c_{_\Gamma} &= \left(\vec c_{_\Gamma}\cdot\vec e_z\right)\vec e_z+(\vec c_{_\Gamma}\cdot\vec n_{_\Gamma})\,\vec n_{_\Gamma},
\label{eq:im2}
\end{align}
\end{subequations}
where \eqref{eq:im1} is a reformulation of the relaxed contact angle condition \eqref{eq:bd2} and \eqref{eq:im2} is a decomposition of $\vec c_{_\Gamma}$ with $\vec e_z=(0,~0,~1)^T$. Now choosing $\bmath{v}=\bmath{g}\in\mathbb{X}$ in \eqref{eq:Kformu}, we obtain 
\begin{align}
0&=\Bigl(\mathcal{H}\,\vec n,~\bmath{g}\Bigr)_{S(t)}-\Bigl(\nabla_s\vec X,~\nabla_s\bmath{g}\Bigr)_{S(t)} + \Bigl(\vec c_{_\Gamma},~\bmath{g}\Bigr)_{\Gamma(t)}\nn\\
&=\Bigl( \mathcal{H}\,\vec n,~\bmath{g}\Bigr)_{S(t)}-\Bigl(\nabla_{s}\vec X,~\nabla_{s}\bmath{g}\Bigr)_{S(t)} + \Bigl(\vec c_{_\Gamma}\cdot\vec n_{_\Gamma},~\bmath{g}\cdot\vec n_{_\Gamma}\Bigr)_{\Gamma(t)}\nn \\
&=\Bigl( \mathcal{H}\,\vec n,~\bmath{g}\Bigr)_{S(t)}-\Bigl(\nabla_{s}\vec X,~\nabla_{s}\bmath{g}\Bigr)_{S(t)} -\frac{1}{\eta}\Bigl(\bmath{v}\cdot\vec n_{_\Gamma},~\bmath{g}\cdot\vec n_{_\Gamma}\Bigr)_{\Gamma(t)} + \cos\theta_Y\Bigl(\bmath{g},~\vec n_{_\Gamma}\Bigr)_{\Gamma(t)},\nn
\end{align}
where in the second equality we have used \eqref{eq:im2} and the fact $\bmath{g}\cdot\vec e_z=0$  on $\Gamma(t)$ and the last equality results from \eqref{eq:im1}.

Collecting these results, we obtain the weak formulation for the sharp-interface model of SSD \eqref{eqn:model} with boundary conditions \eqref{eqn:bd}: Given an initial interface $S(0)$ with boundary $\Gamma(0)$, we use the velocity equation \eqref{eq:velocity} and find the interface velocity $\bmath{v}(\cdot,t)\in \mathbb{X}$ and the mean curvature  $ \mathcal{H}(\cdot,t)\in H^1(S(t))$ such that
\begin{subequations}
\label{eqn:isoweakform}
\begin{align}
\label{eq:isoweakform1}
&\Bigl(\vec n\cdot\bmath{v},~\psi\Bigr)_{S(t)} + \Bigl(\nabla_{s}\mathcal{H},~\nabla_{s}\psi\Bigr)_{S(t)} =0\quad\forall \psi\in H^1(S(t)),\\[0.5em]
&\Bigl(\mathcal{ H}\,\vec n,~\bmath{g}\Bigr)_{S(t)}-\Bigl(\nabla_{s}\vec X,~\nabla_{s}\bmath{g}\Bigr)_{S(t)}-\frac{1}{\eta}\Bigl(\bmath{v}\cdot\vec n_{_\Gamma},~\bmath{g}\cdot\vec n_{_\Gamma}\Bigr)_{\Gamma(t)}+\cos\theta_Y\Bigl(\vec n_{_\Gamma},~\bmath{g}\Bigr)_{\Gamma(t)}\nn\\
&\hspace{4cm} = 0\qquad \forall\bmath{g}\in \mathbb{X}.
\label{eq:isoweakform2}
\end{align}
\end{subequations}
The above weak formulation is an extension of the previous 2D work in Ref.~\cite{Zhao20} to 3D. Similar work for coupled surface or clusters can be found in Refs.~\cite{Barrett10, Barrett10cluster}.

\subsection{Volume\slash mass conservation and energy dissipation}
\label{ssec:massenergy}

For the weak formulation in \eqref{eqn:isoweakform}, we have
\begin{prop}[Mass conservation and energy dissipation]\label{prop:isomassenergy} Let $\left(\vec X,~\bmath{v},~ \mathcal{H}\right)$ be a solution of the weak formulation \eqref{eqn:isoweakform} and \eqref{eq:velocity}, then the total mass of the thin film is conserved, i.e.,
\begin{align}
|\Omega(t)|\equiv|\Omega(0)|,\qquad t \geq 0,
\end{align}
and the total free energy of the system defined in \eqref{eq:Totalenergy} is decreasing, i.e.,
\begin{align}
\label{eq:eedis}
W(t) \leq W(t^\prime) \leq W(0)=|S(0)|-\cos\theta_Y|S_1(0)|,\qquad \forall t \geq t^\prime  \geq 0.
\end{align}
\end{prop}

\begin{proof}
By the Reynolds transport theorem for the moving domain $\Omega(t)$ (see Theorem 33 in \cite{Barrett20}), we have
\begin{align*}
\frac{\rd }{\rd t}|\Omega(t)| = \int_{S(t)}\bmath{v}\cdot\vec n\,\dA,\qquad t\ge0,
\end{align*}
where we have used the fact that the normal velocity of $S_1(t)$ is zero. Now setting $\psi=1$ in \eqref{eq:isoweakform1} yields
\begin{align*}
\frac{\rd }{\rd t}|\Omega(t)| = \Bigl(\nabla_{s} \mathcal{H},~\nabla_{s}1\Bigr)_{S(t)} = 0,\qquad t\ge0,
\end{align*}
which implies the conservation of the total mass.

  Again, using the Reynolds transport theorem for the 2D  moving domain $S_1(t)$ yields
\begin{align*}
\frac{\rd }{\rd t}|S_1(t)| = \int_{\Gamma(t)}\vec n_{_\Gamma}\cdot\bmath{v}\;\ds.
\end{align*}
By noting \eqref{eq:isorey} and \eqref{eq:Totalenergy}, we then have
\begin{align}
\label{eq:weakenergyd1}
\frac{\rd }{\rd t}W(t)=\Bigl(\nabla_{s}\vec X,~\nabla_{s}\bmath{v}\Bigr)_{S(t)} - \cos\theta_Y\Bigl(\vec n_{_\Gamma},~\bmath{v}\Bigr)_{\Gamma(t)}.
\end{align}
Now choosing $\psi= \mathcal{H}$ in \eqref{eq:isoweakform1} and $\bmath{g}=\bmath{v}$ in \eqref{eq:isoweakform2} and using \eqref{eq:weakenergyd1}, we obtain
\begin{align*}
\frac{\rd }{\rd t}W(t)=-\Bigl(\nabla_{s} \mathcal{H},~\nabla_{s}\mathcal{H}\Bigr)_{S(t)} - \frac{1}{\eta}\Bigl(\bmath{v}\cdot\vec n_{_\Gamma},~\bmath{v}\cdot\vec n_{_\Gamma}\Bigr)_{\Gamma(t)} \leq0,
\end{align*}
which immediately implies \eqref{eq:eedis}.
\end{proof}

\section{Parametric finite element approximation}
\label{sec:pfem}

In this section, we present an energy-stable PFEM (ES-PFEM) as the full discretization of the weak formulation \eqref{eqn:isoweakform} by using continuous piecewise linear elements in space and the (semi-implicit) backward Euler method in time. We show the well-posedness and the unconditional energy stability of the resulting method.

\subsection{The discretization}
\label{ssec:discre}

Take $\tau>0$ as the uniform time step size and denote the discrete time levels as $t_m=m\,\tau$ for $m=0,1,\cdots.$  We then approximate the evolution surface $S(t_m)$ by the polygonal surface mesh $S^m$ with a collection of $K$ vertices $\{\vec q_k^m\}_{k=1}^K$ and $N$ mutually disjoint non-degenerate triangles
\begin{align}
\label{eq:polygonalS}
S^m:=\bigcup_{j=1}^N \overline{\sigma_j^m},\quad m \geq 0.
\end{align}
 For $1 \leq j \leq N$, we take $\sigma_j^m:=\triangle\{\vec q_{j_k}^m\}_{k=1}^3$ to indicate that $\{\vec q_{j_1}^m,~\vec q_{j_2}^m,~\vec q_{j_3}^m\}$ are the three vertices of $\sigma_j^m$ and ordered anti-clockwise on the outer surface of $S^m$. The boundary of $S^m$ is a collection of $N_{_\Gamma}$ connected line segments denoted by $\Gamma^m:=\bigcup_{j=1}^{N_{_\Gamma}}\overline{l_j^m}$. Similarly, we take $l_j^m = [\vec p_{j_1}^m,~\vec p_{j_2}^m]$ to indicate that $\vec p_{j_1}^m$ and $\vec p_{j_2}^m$ are the two endpoints of the $j$th line segment and ordered according to the orientation of the curve $\Gamma^m$.

We define the following finite-dimensional spaces over $S^m$ as
\begin{subequations}
\label{eqn:FEMspaces}
\begin{align}
&\mathbb{M}^m:=\left\{\psi\in C(S^m):\psi|_{\sigma_j^m}\in \mathcal{P}^1(\sigma_j^m),\quad\forall1 \leq j \leq N\right\},\\
&\mathbb{M}^m_0:=\left\{\psi\in \mathbb{M}^m: \psi|_{l_j^m} = 0,\quad\forall1 \leq j \leq N_{_\Gamma}\right\},\\
&\mathbb{X}^m:=\,\mathbb{M}^m\times \mathbb{M}^m\times\mathbb{M}_0^m,
\end{align}
\end{subequations}
where $\mathcal{P}^1(\sigma_j^m)$ denotes the spaces of all polynomials with degrees at most $1$ on $\sigma_j^m$.

With the finite element spaces defined above, we can naturally parameterize $S^{m+1}$ over $S^m$ such that $S^{m+1}:=\vec X^{m+1}(\cdot)\in \mathbb{X}^m$. In particular, $S^m:=\vec X^m(\cdot)\in\mathbb{X}^m$ can be considered as the identity function. Let $\vec n^m:=\vec n(\vec X^m)$ be the outward unit normal to $S^m$. It is a piecewise constant vector-valued function and can be defined as
\begin{align}
\label{eq:dnor}
\vec n^m: = \sum_{j=1}^n\vec n_j^m\,\chi_{_{\sigma_j^m}}\quad{\rm with}\quad \vec n_j^m = \frac{(\vec q_{j_2}^m - \vec q_{j_1}^m)\times(\vec q_{j_3}^m - \vec q_{j_1}^m)}{\Vnorm{(\vec q_{j_2}^m - \vec q_{j_1}^m)\times(\vec q_{j_3}^m - \vec q_{j_1}^m)}},
\end{align}
where $\chi_{_E}$ is the usual characteristic function with set $E$, and $\vec n_j^m$ is the outward unit normal on the triangle $\sigma_j^m:=\Delta\{\vec q_{j_k}^m\}_{k=1}^3$. At the boundary $\Gamma^m$, we denote by $\vec n_{_\Gamma}^m:=\vec n_{_\Gamma}(\vec X^m)$ the outward unit conormal vector of $S_1^m$. Then we can compute it at each line segment $l_j^m=[\vec p_{j_1}^m,~\vec p_{j_2}^m]$ as
 \begin{align}
\vec n_{_\Gamma,_j}^m:=\left. \vec n_{_\Gamma}^m\right|_{_{l_j^m}}=\frac{\left(\vec p_{j_2}^m - \vec p_{j_1}^m\right)\times\vec e_z}{\Vnorm{\left(\vec p_{j_2}^m - \vec p_{j_1}^m\right)\times\vec e_z}},\qquad\forall 1 \leq j \leq N_{_\Gamma}.
 \end{align}

If $f,~g$ are two piecewise continuous functions with possible discontinuities across the edges of the triangle element, we define the following mass-lumped inner product to approximate the inner product over $S(t_m)$
\begin{align}
 \bigl( f,~g\bigr)_{S^m}^h := \frac{1}{3}\sum_{j=1}^N \sum_{k=1}^3|\sigma_j^m|\,f\left((\vec q_{j_{_k}}^m)^-\right)\cdot g\left((\vec q_{j_{_k}}^m)^-\right),
 \label{eqn:norm3d}
 \end{align}
where $|\sigma_j^m|=\frac{1}{2}\Vnorm{(\vec q_{j_2}^m - \vec q_{j_1}^m)\times(\vec q_{j_3}^m - \vec q_{j_1}^m)}$ is the surface area of $\sigma_j^m$,  and $f((\vec q_{j_{_k}}^m)^-)$ denotes the one-sided limit of $f(\vec q)$ when $\vec q$ approaches towards $\vec q_{j_{_k}}^m$ from triangle $\sigma_j^m$, i.e., $f((\vec q_{j_{_k}}^m)^-)=\lim\limits_{\sigma_j^m\ni\vec q\rightarrow\vec q_{j_{_k}}^m }f(\vec q)$.

Let $ \mathcal{H}^{m}(\cdot)\in\mathbb{M}^m$ be the numerical solution of the mean curvature at $t_m$. We propose the following ES-PFEM as the full discretization of the weak formulation \eqref{eqn:isoweakform}. Given the polygonal surface $S^0:=\vec X^0(\cdot)\in \mathbb{X}^0$ as a discretization of the initial surface $S(0)$, for $m \geq 0$ we find the evolution surface $S^{m+1}:=\vec X^{m+1}(\cdot)\in\mathbb{X}^m$ and the mean curvature $ \mathcal{H}^{m+1}(\cdot)\in \mathbb{M}^m$ via solving the following two equations
\begin{subequations}
\label{eqn:isopfem}
\begin{align}
\label{eq:isopfem1}
&\Bigl(\frac{\vec X^{m+1} - \vec X^m}{\tau},~\vec n^m\,\psi^h\Bigr)_{S^m}^h + \Bigl(\nabla_{s} \mathcal{H}^{m+1},~\nabla_{s}\psi^h\Bigr)_{S^m} = 0\quad\forall \psi^h \in \mathbb{M}^m,\\[0.5em]
\label{eq:isopfem2}
&\Bigl( \mathcal{H}^{m+1}\,\vec n^m,~\bmath{g}^h\Bigr)_{S^m}^h-\Bigl(\nabla_{s}\vec X^{m+1},~\nabla_{s}\bmath{g}^h\Bigr)_{S^m}+\cos\theta_Y\,\Bigl(\vec n_{_\Gamma}^{m+\frac{1}{2}},~\bmath{g}^h\Bigr)_{\Gamma^m}\\
&\hspace{2cm}-\frac{1}{\eta\,\tau}\Bigl((\vec X_{_\Gamma}^{m+1} - \vec X_{_\Gamma}^m)\cdot\vec n_{_\Gamma}^m,~~\bmath{g}^h\cdot\vec n_{_\Gamma}^m\Bigr)_{\Gamma^m} = 0\quad \forall\bmath{g}^h\in \mathbb{X}^m,\nn
\end{align}
\end{subequations}
where $\nabla_{s}$ is defined over $S^m$ and %$
%\vec n^{m+\frac{1}{2}}_{_\Gamma}$ is %
\begin{align}
\label{eq:nsemi}
\vec n_{_{\Gamma}}^{m+\frac{1}{2}}=\frac{1}{2}\left(\partial_s\vec X_{_\Gamma}^m + \partial_s\vec X_{_\Gamma}^{m+1}\right)\times\vec e_z,
\end{align}
with $s$ being the arc length parameter of $\Gamma^m$. We will show in Lemma.~\ref{lem:energy1} that this semi-implicit approximation helps to maintain the energy stability for the substrate energy. 

%In practical computation,  $\forall f\in \mathbb{M}^m$,  we can compute $\nabla_{s}f$ on a typical triangle $\sigma=\Delta\{\vec q_k\}_{k=1}^3$ as
% \begin{align}
% \label{eq:disgradient}
%\left. \nabla_{s}f\right|_{\sigma}:=f_1\,\frac{(\vec q_3-\vec q_2)\times\vec n}{2|\sigma|} + f_2\,\frac{(\vec q_1-\vec q_3)\times\vec n}{2|\sigma|} + f_3\,\frac{(\vec q_2-\vec q_1)\times\vec n}{2|\sigma|},
% \end{align}
%where $|\sigma| = \frac{1}{2}\Vnorm{\vec q_2-\vec q_1)\times(\vec q_3-\vec q_1)}$, $f_i = f(\vec q_i)$ for $i=1,2,3$, and $\vec n = \frac{(\vec q_2-\vec q_1)\times(\vec q_3-\vec q_1)}{2|\sigma|}$.

The proposed numerical method is semi-implicit, i.e. only a linear system needs to be solved at each time step. It conserves the total volume very well up to the truncation error from the time discretization. Besides, a spatial discretization of \eqref{eq:isoweakform1} by using the piecewise linear elements introduces an implicit tangential velocity of the mesh points and result in the good mesh quality. The detailed investigation of this property has been presented in~\cite{Barrett08JCP}.

\subsection{Well-posedness}
\label{ssec:wp}

Let $\mathcal{J}_k^m:=\{1 \leq j \leq N\ |\ \vec q_k^m\in\overline{\sigma_j^m}\}$ be the index collection of triangles that contain the vertex $\vec q_k^m$, $1 \leq k \leq K$. We define the weighted unit normal at the vertex $\vec q_k^m$ as
\begin{align}
\label{eq:weightN}
\bmath{\omega}^m_k:=\frac{1}{\sum_{j\in\mathcal{J}_k^m}|\sigma_j^m|}\left(\sum_{j\in\mathcal{J}_k^m}\,|\sigma_j^m|\,\vec n_j^m\right),
\end{align}
where $\vec n_j^m$ is  defined in \eqref{eq:dnor} as the unit normal of the triangle $\sigma_j^m$.

For the discretization in \eqref{eqn:isopfem}, we have
\begin{thm}[Existence and uniqueness]\label{th:wellp}
Assume that $S^m$ satisfies:
\begin{itemize}
\item[(i)] $\min_{1 \leq j \leq N}|\sigma_j^m|>0$;
\item [(ii)] ${\rm dim}\{\vec n_{_\Gamma,_j}^m\}_{j=1}^{N_{_\Gamma}}=2$;
\item [(iii)] there exist a vertex $\vec q_{k_0}$ on the polygonal curve $\Gamma^m$ such that $\bmath{\omega}_{k_0}^m=(w^m_{k_0,1},~w_{k_0,2}^m,~w_{k_0,3}^m)^T$ satisfies $\left(w_{k_0,1}^m\right)^2+\left(w_{k_0,2}^m\right)^2>0$.
\end{itemize}
If $\cos\theta_Y=0$, i.e., $\theta_Y=\frac{\pi}{2}$, then the linear system in \eqref{eqn:isopfem} admits a unique solution.
\end{thm}

\begin{proof}
It is sufficient to prove the corresponding homogeneous linear system only has zero solution. Therefore we consider the following homogeneous linear system: Find $\Bigl(\vec X^h,~ \mathcal{H}^h\Bigr)\in\Bigl(\mathbb{X}^m,~\mathbb{M}^m\Bigr)$ such that $\forall\psi^h\in \mathbb{M}^m,\;\bmath{g}^h\in\mathbb{X}^h$
\begin{subequations}
\label{eqn:homopfem}
\begin{align}
\label{eq:homopfem1}
&\Bigl(\vec X^{h},~\vec n^m\,\psi^h\Bigr)_{S^m}^h + \Bigl(\nabla_{s} \mathcal{H}^{h},~\nabla_{s}\psi^h\Bigr)_{S^m} = 0,\\[0.5em]
\label{eq:homopfem2}
&\Bigl( \mathcal{H}^{h}\,\vec n^m,~\bmath{g}^h\Bigr)_{S^m}^h-\left(\nabla_{s}\vec X^{h},~\nabla_{s}\bmath{g}^h\right)_{S^m}-\frac{1}{\eta\,\tau}\Bigl(\vec X_{_\Gamma}^{h} \cdot\vec n_{_\Gamma}^m,~~\bmath{g}^h\cdot\vec n_{_\Gamma}^m\Bigr)_{\Gamma^m} = 0.
\end{align}
\end{subequations}
Setting $\psi^h =  \mathcal{H}^{h}$ in \eqref{eq:homopfem1} and $\bmath{g}^h=\vec X^{h}$ in \eqref{eq:homopfem2}, and combining these two equations, we arrive at
\begin{align}
&\Bigl(\nabla_{s}\mathcal{H}^{h},~\nabla_{s} \mathcal{H}^{h}\Bigr)_{S^m} + \Bigl(\nabla_{s}\vec X^{h},~\nabla_{s}\vec X^{h}\Bigr)_{S^m} + \frac{1}{\eta\tau}\Bigl(\vec X_{_\Gamma}^{h}\cdot\vec n^m_{_\Gamma},~\vec X_{_\Gamma}^{h}\cdot\vec n^m_{_\Gamma}\Bigr)_{\Gamma^m} =0.\nn
%\label{eq:wp1}
\end{align}
This implies 
\begin{align*}
\mathcal{H}^h\equiv \mathcal{H}^c,\qquad \vec X^h\equiv \vec X^c,\qquad \vec X_{_\Gamma}^h\cdot\vec n_{_\Gamma}^m=0,
\end{align*}
where $\mathcal{H}^c$ and $\vec X^c$ represent constants. By assumption (ii), we obtain $\vec X_{_\Gamma}^h=\vec 0$, and thus we get $\vec X^h \equiv \vec 0$. Plugging $ \mathcal{H}^c$ and $\vec X^c=\vec 0$ into \eqref{eq:homopfem2} yields
\begin{align}
\label{eq:wp2}
 \mathcal{H}^c\,\Bigl(\bmath{g}^h,~\vec n^m\Bigr)_{S^m}^h = 0,\qquad\forall\bmath{g}^h\in\mathbb{X}^m.
\end{align}

Now we choose $\bmath{g}^h\in\mathbb{X}^m$ such that
\begin{equation}\label{omgh11}
\left.\bmath{g}^h\right|_{\vec q_k^m}=\left\{\begin{array}{ll}
\Bigl(w_{k_0,1}^m, ~w_{k_0,2}^m,~0\Bigr)^T, &k=k_0,\\[0.4em]
\vec 0 , &{\rm otherwise}.
\end{array}\right.\nn
\end{equation}
Plugging $\bmath{g}^h$ into \eqref{eq:wp2} and by noting the mass-lumped norm in \eqref{eqn:norm3d}, we obtain
\begin{align*}
 \mathcal{H}^c\Bigl(\bmath{g}^h,~\vec n^m\Bigr)_{S^m}^h = \frac{ \mathcal{H}^c}{3}\sum_{j=1}^N\sum_{i=1}^3\bmath{g}^h(\vec q_{j_i}^m)|\sigma_j^m|\vec n_j^m = \frac{ \mathcal{H}^c}{3}\sum_{j\in \mathcal{J}_{k_0}^m}|\sigma_j^m|\Bigl[\left(w_{k_0,1}^m\right)^2+\left(w_{k_0,2}^m\right)^2\Bigr]=0.
\end{align*}
By noting the two assumptions (i) and (iii), we obtain $ \mathcal{H}^c=0$. This shows the homogenous system \eqref{eqn:homopfem} has only zero solution. Thus the numerical scheme \eqref{eqn:isopfem} admits a unique solution.
\end{proof}

Assumption (i) implies that each triangle element is non-degenerate, assumption (ii) implies that there exist at least two line segments on the polygonal curve of contact line not parallel to each other, and assumption (iii) implies that the weighted normal vector at $\vec q_{k_0}^m\in\Gamma^m$ is not perpendicular to the substrate surface ($xoy$-plane). We note the well-posedness is only proved when $\cos\theta_Y=0$. By using matrix perturbation theory, we can also show that \eqref{eq:isopfem1}-\eqref{eq:isopfem2} is well-posed as long as $|\cos\theta_Y|\ll 1$. In practical computation, we observe the linear system in \eqref{eqn:isopfem} is always invertible.  

%For general case, it is possible to consider the following alternative numerical method:\begin{subequations}
%\label{eqn:alterisopfem}
%\begin{align}
%%
%\label{eq:alterisopfem1}
%&\Bigl(\frac{\vec X^{m+1} - \vec X^m}{\tau},~\vec n^m\,\psi^h\Bigr)_{S^m}^h + \Bigl(\nabla_{s} \mathcal{H}^{m+1},~\nabla_{s}\psi^h\Bigr)_{S^m} = 0,\quad\forall \psi^h \in \mathbb{M}^m,\\[0.5em]
%%
%\label{eq:alterisopfem2}
%&\Bigl( \mathcal{H}^{m+1}\,\vec n^m,~\bmath{g}^h\Bigr)_{S^m}^h-\Bigl(\nabla_{s}\vec X^{m+1},~\nabla_{s}\bmath{g}^h\Bigr)_{S^m}+\cos\theta_Y\,\Bigl(\vec n_{_\Gamma}^{m},~\bmath{g}^h\Bigr)_{\Gamma^m}\\
%&\hspace{2cm}-\frac{1}{\eta\,\tau}\Bigl([\vec X_{_\Gamma}^{m+1} - \vec X_{_\Gamma}^m]\cdot\vec n_{_\Gamma}^m,~~\bmath{g}^h\cdot\vec n_{_\Gamma}^m\Bigr)_{\Gamma^m} = 0,\quad \forall\bmath{g}^h\in \mathbb{X}^m,\nn
%%
%\end{align}
%\end{subequations}
%which can be shown to be well-posed regardless of $\theta_Y$ since the relevant boundary term is explicitly valued. However, it seems not possible to prove the unconditional energy stability for \eqref{eq:alterisopfem1}-\eqref{eq:alterisopfem2}.

\subsection{Energy dissipation}
\label{ssec:energyd}

For $\vec n_{_\Gamma}^{m+\frac{1}{2}}$ defined in \eqref{eq:nsemi}, we have the following lemma.
\begin{lem}
\label{lem:energy1}
It holds that
\begin{align}
\label{eq:subenergy}
|S_1^{m+1}| - |S_1^m| = \Bigl(\vec n_{_\Gamma}^{m+\frac{1}{2}},~[\vec X_{_\Gamma}^{m+1} - \vec X_{_\Gamma}^m]\Bigr)_{\Gamma^m},\qquad m \geq 0,
\end{align}
where $|S_1^m|$ denotes the surface area enclosed by the closed plane curve $\Gamma^m$ on the substrate.
\end{lem}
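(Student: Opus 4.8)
The plan is to reduce everything to the planar (shoelace) area formula, exploiting that $\Gamma^m$ and $\Gamma^{m+1}$ both lie in the substrate plane $\{x_3=0\}$ by the contact line condition \eqref{eq:bd1}. For a positively oriented closed planar curve $\Gamma$ with parameterization $\vec X_{_\Gamma}$, Green's theorem expresses the enclosed area as the line integral
\begin{align*}
|S_1| = \frac{1}{2}\oint_{\Gamma}\bigl(\vec X_{_\Gamma}\times\partial_s\vec X_{_\Gamma}\bigr)\cdot\vec e_z\,\ds.
\end{align*}
First I would record this identity and note that, since the shoelace formula is parameterization-independent, it stays valid for $\Gamma^{m+1}$ even when the latter is parameterized over the arc length $s$ of $\Gamma^m$ — the natural choice here, because $\vec X_{_\Gamma}^{m+1}$ is continuous and piecewise linear over the segments of $\Gamma^m$. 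Consequently both $\partial_s\vec X_{_\Gamma}^m$ and $\partial_s\vec X_{_\Gamma}^{m+1}$ are piecewise constant along $\Gamma^m$.

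Next I would introduce the antisymmetric planar pairing $\langle\vec u,\vec w\rangle:=(\vec u\times\vec w)\cdot\vec e_z=u_1w_2-u_2w_1$ for vectors in the substrate plane, and record the two elementary identities $\vec X_{_\Gamma}\cdot(\partial_s\vec X_{_\Gamma}\times\vec e_z)=\langle\vec X_{_\Gamma},\partial_s\vec X_{_\Gamma}\rangle$ and, from the definition \eqref{eq:nsemi}, $\vec n_{_\Gamma}^{m+\frac{1}{2}}\cdot\vec w=\frac{1}{2}\langle\vec w,\partial_s\vec X_{_\Gamma}^m+\partial_s\vec X_{_\Gamma}^{m+1}\rangle$. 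Writing $\vec a=\vec X_{_\Gamma}^{m+1}$ and $\vec b=\vec X_{_\Gamma}^m$ for brevity, the left-hand side of \eqref{eq:subenergy} becomes $\frac{1}{2}\oint(\langle\vec a,\partial_s\vec a\rangle-\langle\vec b,\partial_s\vec b\rangle)\,\ds$, while the right-hand side, using the second identity and bilinearity, expands to
\begin{align*}
\frac{1}{2}\oint\langle\vec a-\vec b,\;\partial_s\vec a+\partial_s\vec b\rangle\,\ds = \frac{1}{2}\oint\Bigl(\langle\vec a,\partial_s\vec a\rangle-\langle\vec b,\partial_s\vec b\rangle+\langle\vec a,\partial_s\vec b\rangle-\langle\vec b,\partial_s\vec a\rangle\Bigr)\,\ds.
\end{align*}
Thus the claimed identity reduces to showing that the cross terms cancel, i.e. $\oint(\langle\vec a,\partial_s\vec b\rangle-\langle\vec b,\partial_s\vec a\rangle)\,\ds=0$.

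This last cancellation is the crux, and I expect it to be the only step requiring care. It follows from integration by parts on the closed curve $\Gamma^m$: since $\langle\vec a,\partial_s\vec b\rangle=a_1\partial_sb_2-a_2\partial_sb_1$, and $a_1b_2$, $a_2b_1$ are continuous piecewise-smooth functions whose total derivatives integrate to zero around the loop, one gets $\oint a_1\partial_sb_2\,\ds=-\oint(\partial_sa_1)\,b_2\,\ds$ and similarly for the other term, yielding $\oint\langle\vec a,\partial_s\vec b\rangle\,\ds=\oint\langle\vec b,\partial_s\vec a\rangle\,\ds$. The subtle point worth spelling out is that we work on a polygonal rather than a smooth curve, so the integration by parts must be carried out segment by segment, the contributions at the shared vertices cancelling by continuity of $\vec a$ and $\vec b$ across vertices; the antisymmetry of $\langle\cdot,\cdot\rangle$ then delivers the cancellation. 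This is precisely why the Crank–Nicolson-type average $\vec n_{_\Gamma}^{m+\frac{1}{2}}$ of \eqref{eq:nsemi}, rather than the fully explicit $\vec n_{_\Gamma}^{m}$, reproduces the substrate area increment \emph{exactly}, which is in turn what yields the unconditional energy stability of the substrate term in the subsequent analysis.
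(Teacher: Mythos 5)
Your proposal is correct and follows essentially the same route as the paper's proof: expand the right-hand side via the definition \eqref{eq:nsemi}, identify the two ``diagonal'' terms with $|S_1^{m+1}|$ and $|S_1^m|$ through the shoelace\slash Green's-theorem formula, and cancel the two cross terms by integration by parts around the closed curve (the paper's $\mathcal{C}=\mathcal{D}$ step is exactly your antisymmetry cancellation). Your extra remarks on the reparameterization of $\Gamma^{m+1}$ over the arc length of $\Gamma^m$ and on the segment-by-segment integration by parts are points the paper leaves implicit, but they do not change the argument.
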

\begin{proof}
Denote $\Gamma^m:=\vec X_{_\Gamma}^m(s)$ and $\Gamma^{m+1}:=\vec X_{_\Gamma}^{m+1}(s)$, where $s\in[0,~L^m]$ is the arc length parameter of $\Gamma^m$. We can compute
\begin{align}
\Bigl(\vec n_{_\Gamma}^{m+\frac{1}{2}},~[\vec X_{_\Gamma}^{m+1} - \vec X_{_\Gamma}^m]\Bigr)_{\Gamma^m}&=\underbrace{\frac{1}{2}\,\int_0^{L^m}\left(\partial_s\vec X_{_\Gamma}^{m+1}\times\vec e_z\right)\cdot\vec X_{_\Gamma}^{m+1}\,\ds}_{\mathcal{A}} - \underbrace{\frac{1}{2}\int_0^{L^m}\left(\partial_s\vec X_{_\Gamma}^{m}\times\vec e_z\right)\cdot\vec X_{_\Gamma}^{m}\,\ds}_{\mathcal{B}}\nn \\
&\hspace{0.5em}+\; \underbrace{\frac{1}{2}\int_0^{L^m}\left(\partial_s\vec X_{_\Gamma}^{m}\times\vec e_z\right)\cdot\vec X_{_\Gamma}^{m+1}\,\ds}_{\mathcal{C}}  - \underbrace{\frac{1}{2}\int_0^{L^m}\left(\partial_s\vec X_{_\Gamma}^{m+1}\times\vec e_z\right)\cdot\vec X_{_\Gamma}^{m}\,\ds}_{\mathcal{D}}. \nn
\end{align}
Now we can recast the first two terms as
\begin{align}\label{eq:subenergy1}
\mathcal{A}=\frac{1}{2}\Bigl(\vec n_{_\Gamma}^{m+1},~\vec X_{_\Gamma}^{m+1}\Bigr)_{\Gamma^{m+1}} = |S_1^{m+1}|,\qquad
\mathcal{B}=\frac{1}{2}\Bigl(\vec n_{_\Gamma}^{m},~\vec X_{_\Gamma}^{m+1}\Bigr)_{\Gamma^{m}} = |S_1^{m}|.
\end{align}
For the third term, using integration by parts and the identity $(\vec a\times\vec b)\cdot\vec c = -(\vec c\times\vec b)\cdot\vec a$ yields
\begin{align}
\label{eq:subenergy2}
\mathcal{C}=-\frac{1}{2}\int_0^{L^m}\left(\vec X_{_\Gamma}^{m}\times\vec e_z\right)\cdot\partial_s\vec X_{_\Gamma}^{m+1}\,\ds = \mathcal{D}.
\end{align}
Collecting these results in \eqref{eq:subenergy1} and \eqref{eq:subenergy2}, we obtain \eqref{eq:subenergy}.
\end{proof}

Denote
\begin{align}
\label{eq:Energyd}
W^m: = |S^m| - \cos\theta_Y\,|S_1^m|.
\end{align}
Similar to the previous work in \cite{Barrett08JCP}, we can prove:
\begin{thm}\label{th:EnergyS} Let $\Bigl(\vec X^{m+1},~ \mathcal{H}^{m+1}\Bigr)$ be the numerical solution of \eqref{eqn:isopfem}, then the energy is decreasing during the evolution: i.e.,
\begin{align}
W^{m+1}  \leq W^m \leq W^0= |S^0| -\cos\theta_Y|S_1^0|,\quad m \geq 0.
\label{eq:EnergyS}
\end{align}
Moreover, we have,
\begin{align}
&\sum_{l=1}^{m+1}\Vnorm{\nabla_{s}\mathcal{H}^{l}}^2_{S^{l-1}} + \frac{1}{\eta}\sum_{l=1}^{m+1}\Vnorm{\left(\frac{\vec X^{l}_{_\Gamma} - \vec X^{l-1}_{_\Gamma}}{\tau}\right)\cdot\vec n^{l-1}_{_\Gamma}}^2_{\Gamma^{l-1}}  \leq \frac{W^0-W^{m+1}}{\tau},\quad \forall m  \geq 0,
\label{eq:EnergySW}
\end{align}
where $\norm{\cdot}_{S^l}$ and $\norm{\cdot}_{\Gamma^l}$ are the $L^2$-norm over $S^l$ and $\Gamma^l$, respectively.
\end{thm}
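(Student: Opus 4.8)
The plan is to test the two relations of the scheme \eqref{eqn:isopfem} against carefully chosen discrete functions so that the curvature contributions cancel. First I would set $\psi^h=\mathcal{H}^{m+1}\in\mathbb{M}^m$ in \eqref{eq:isopfem1}, which yields
\[
\Bigl(\frac{\vec X^{m+1}-\vec X^m}{\tau},~\vec n^m\,\mathcal{H}^{m+1}\Bigr)_{S^m}^h = -\Vnorm{\nabla_{_S}\mathcal{H}^{m+1}}_{S^m}^2,
\]
equivalently $\bigl(\mathcal{H}^{m+1}\vec n^m,~\vec X^{m+1}-\vec X^m\bigr)_{S^m}^h=-\tau\Vnorm{\nabla_{_S}\mathcal{H}^{m+1}}_{S^m}^2$ by symmetry of the mass-lumped product. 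Next I would take $\bmath{g}^h=\vec X^{m+1}-\vec X^m$ in \eqref{eq:isopfem2}; this is admissible in $\mathbb{X}^m$ because the contact-line condition forces the third component of both $\vec X^{m+1}$ and $\vec X^m$ to vanish on $\Gamma^m$, so their difference lies in $\mathbb{M}^m_0$. Substituting the identity from the first equation into the mass-lumped curvature term of the second eliminates $\mathcal{H}^{m+1}$ and leaves a relation among the change in the Dirichlet-type quantity $\bigl(\nabla_{_S}\vec X^{m+1},\nabla_{_S}(\vec X^{m+1}-\vec X^m)\bigr)_{S^m}$, the substrate boundary term, and the two nonnegative dissipation terms.

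The remaining work is to bound the $\nabla_{_S}\vec X$ term from below by the change in surface area and to convert the boundary term into a change in substrate area. For the former I would apply the elementary inequality $\vec a\cdot(\vec a-\vec b)=\tfrac12(\abs{\vec a}^2-\abs{\vec b}^2)+\tfrac12\abs{\vec a-\vec b}^2\ge\tfrac12(\abs{\vec a}^2-\abs{\vec b}^2)$ componentwise under the inner product, giving
\[
\Bigl(\nabla_{_S}\vec X^{m+1},\nabla_{_S}(\vec X^{m+1}-\vec X^m)\Bigr)_{S^m}\ge \tfrac12\Vnorm{\nabla_{_S}\vec X^{m+1}}_{S^m}^2-\tfrac12\Vnorm{\nabla_{_S}\vec X^m}_{S^m}^2.
\]
Then I invoke the geometric inequality $\tfrac12\Vnorm{\nabla_{_S}\vec X^{m+1}}_{S^m}^2\ge\abs{S^{m+1}}$, together with the exact relation $\tfrac12\Vnorm{\nabla_{_S}\vec X^m}_{S^m}^2=\abs{S^m}$, which follows from \eqref{eq:spe} since $\abs{\nabla_{_S}\vec X^m}^2=\sum_{i,j}(\delta_{ij}-n_in_j)^2=2$ on $S^m$. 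The inequality $\tfrac12\Vnorm{\nabla_{_S}\vec X^{m+1}}_{S^m}^2\ge\abs{S^{m+1}}$ I would prove triangle-by-triangle: on each element the two nonzero singular values $\lambda_1,\lambda_2$ of the linear map $\vec X^{m+1}$ obey $\tfrac12(\lambda_1^2+\lambda_2^2)\ge\lambda_1\lambda_2$ by the arithmetic-geometric mean inequality, while $\lambda_1\lambda_2$ is exactly the area magnification factor. For the boundary term I apply Lemma~\ref{lem:energy1}, which gives $\cos\theta_i\bigl(\vec n_{_\Gamma}^{m+\frac{1}{2}},\vec X_{_\Gamma}^{m+1}-\vec X_{_\Gamma}^m\bigr)_{\Gamma^m}=\cos\theta_i(\abs{S_1^{m+1}}-\abs{S_1^m})$.

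Collecting these estimates and recalling $W^m=\abs{S^m}-\cos\theta_i\abs{S_1^m}$ from \eqref{eq:Energyd}, the combined equation becomes the one-step dissipation inequality
\[
W^{m+1}-W^m\le-\tau\Vnorm{\nabla_{_S}\mathcal{H}^{m+1}}_{S^m}^2-\frac{1}{\eta\tau}\Vnorm{(\vec X_{_\Gamma}^{m+1}-\vec X_{_\Gamma}^m)\cdot\vec n_{_\Gamma}^m}_{\Gamma^m}^2\le0,
\]
which gives \eqref{eq:EnergyS} at once by induction on $m$. To obtain \eqref{eq:EnergySW}, I would rewrite the boundary dissipation as $\tfrac{\tau}{\eta}\Vnorm{\frac{\vec X_{_\Gamma}^{m+1}-\vec X_{_\Gamma}^m}{\tau}\cdot\vec n_{_\Gamma}^m}_{\Gamma^m}^2$, sum the one-step inequality over $l=1,\dots,m+1$ (with $m$ replaced by $l-1$), and divide by $\tau$; the telescoping left-hand side produces $W^0-W^{m+1}$. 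I expect the main obstacle to be the geometric area inequality $\tfrac12\Vnorm{\nabla_{_S}\vec X^{m+1}}_{S^m}^2\ge\abs{S^{m+1}}$: making the singular-value and AM-GM argument on each triangle precise is the one genuinely nontrivial ingredient, whereas every other step reduces to routine manipulation of the bilinear forms and the choice of admissible test functions.
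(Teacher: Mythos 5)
Your proposal is correct and follows essentially the same route as the paper: test with $\mathcal{H}^{m+1}$ and $\vec X^{m+1}-\vec X^m$, cancel the mass-lumped curvature terms, apply the identity $a(a-b)=\tfrac12(a^2-b^2)+\tfrac12(a-b)^2$ together with $\tfrac12\Vnorm{\nabla_{_S}\vec X^{m}}_{S^m}^2=|S^m|$ and $\tfrac12\Vnorm{\nabla_{_S}\vec X^{m+1}}_{S^m}^2\ge|S^{m+1}|$, handle the substrate term via Lemma~\ref{lem:energy1}, and telescope. The only cosmetic difference is that the paper cites Lemma 2.1 of Barrett et al.~for the area inequality whereas you sketch its standard singular-value/AM--GM proof, which is the same underlying argument.
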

\begin{proof}
Setting $\psi^h = \tau\, \mathcal{H}^{m+1}$ in \eqref{eq:isopfem1} and $\bmath{g}^h = \vec X^{m+1} - \vec X^m$ in \eqref{eq:isopfem2}, combining these two equations yields
\begin{align}
&\tau\,\Vnorm{\nabla_s\mathcal{H}^{+1}}_{S^m}^2 + \Bigl(\nabla_{s}\vec X^{m+1},~\nabla_{s}\left[\vec X^{m+1}-\vec X^m\right]\Bigr)_{S^m}+\frac{1}{\eta\,\tau}\Vnorm{(\vec X_{_\Gamma}^{m+1} - \vec X_{_\Gamma}^m)\cdot\vec n_{_\Gamma}^m}_{\Gamma^m}^2\nn
\\
&\hspace{2cm} -\cos\theta_Y\Bigl(\vec n_{_\Gamma}^{m+\frac{1}{2}},~[\vec X_{_\Gamma}^{m+1} - \vec X_{_\Gamma}^m]\Bigr)_{\Gamma^m} =0.
\label{eq:energy1}
\end{align}
Using the inequality $a(a-b)\geq\frac{1}{2}\left(a^2-b^2\right)$, we get
\begin{align}
\Bigl(\nabla_{s}\vec X^{m+1},~\nabla_{s}\left[\vec X^{m+1}-\vec X^m\right]\Bigr)_{S^m} \geq \frac{1}{2}\Vnorm{\nabla_{s}\vec X^{m+1}}_{S^m}^2-\frac{1}{2}\Vnorm{\nabla_{s}\vec X^m}_{S^m}^2.\nn
\end{align}
By noting \eqref{eqn:isoieq} and choosing $\vec Y=\vec X^{m+1}$, we have 
\begin{align}
\frac{1}{2}\Vnorm{\nabla_{s}\vec X^{m+1}}_{S^m}^2 \geq |S^{m+1}|,\qquad \frac{1}{2}\Vnorm{\nabla_{s}\vec X^m}_{S^m}^2=|S^m|.\nn
\end{align}
This gives 
\begin{align}
\Bigl(\nabla_{s}\vec X^{m+1},~\nabla_{s}\left[\vec X^{m+1}-\vec X^m\right]\Bigr)_{S^m}\geq |S^{m+1}| - |S^m|,
\label{eq:energy2}
\end{align}
Plugging \eqref{eq:energy2} and \eqref{eq:subenergy} into \eqref{eq:energy1} and noting \eqref{eq:Energyd}, we then obtain
\begin{align}
\label{eq:energy4}
W^{m+1} + \tau\Vnorm{\nabla_{s}~ \mathcal{H}^{m+1}}_{S^m}^2 + \frac{1}{\eta\tau}\Vnorm{(\vec X_{_\Gamma}^{m+1} - \vec X_{_\Gamma}^m)\cdot\vec n_{_\Gamma}^m}^2_{\Gamma^m} \leq W^m,
\end{align}
which immediately implies \eqref{eq:EnergyS}.

Reformulating \eqref{eq:energy4} as
\begin{align*}
\Vnorm{\nabla_{s}\mathcal{H}^{l}}_{S^{l-1}}^2 + \frac{1}{\eta}\Vnorm{\left(\frac{\vec X_{_\Gamma}^{l} - \vec X_{_\Gamma}^{l-1}}{\tau}\right)\cdot\vec n_{_\Gamma}^{l-1}}^2_{\Gamma^{l-1}} \leq \frac{W^{l-1}-W^l}{\tau},\qquad l \geq 1,
\end{align*}
summing up for $l=1,2,\cdots,m+1$, we obtain \eqref{eq:EnergySW}.
\end{proof}

%We know that the semi-implicit approximation of $\vec n_{_\Gamma}$ in \eqref{eq:nsemi} is to maintain the stability of the substrate energy. It is possible to simple apply an explicit approximation of $\vec n_{_\Gamma}$ as discussed in Ref.~\cite{Barrett10cluster}. In practical computation, we observe that numerical solutions obtained from the two different approximations are relatively small.

\section{For  anisotropic surface energies in Riemannian metric form}
\label{sec:aniso}

In this section, we first present the sharp-interface model of SSD with anisotropic surface energies in the Riemannian metric form, and then generalize our ES-PFEM for solving the anisotropic model.

\subsection{The model and its weak form}
\label{ssec:aniweakform}

In the case of anisotropic surface energies, the total free energy of the SSD system reads
\begin{align}
\label{eq:anitotalW}
W_\gamma(t):=\int_{S(t)}\gamma(\vec n)\,\dA - \cos\theta_Y |S_1(t)|,
\end{align}
where $\gamma(\vec n)$ represents the anisotropic surface energy density. In the current work we restrict ourselves to the anisotropy which is given by the sums of weighted vector norms as discussed in \cite{Barrett08Ani}:
\begin{align}
\label{eq:aniso}
\gamma(\vec n) = \sum_{i=1}^L\gamma_i(\vec n)=\sum_{i=1}^L\sqrt{\vec n^T\, G_i\,\vec n},
\end{align}
where $G_i\in\mathbb{R}^{3\times3}$ is symmetric positive definite for $i = 1,\cdots, L$. Some typical examples of $\gamma(\vec n)$ are: (1) isotropic surface energy: $L=1$, $G_1=\mathbb{I}$, which gives $\gamma(\vec n)\equiv 1$; (2) ellipsoidal surface energy: $L=1$,
$G_1={\rm diag}(a_1^2,  a_2^2, a_3^2)$, which gives the ellipsoidal surface energy
\begin{align}
\label{eq:ellip}
\gamma(\vec n) = \sqrt{\sum_{i=1}^3 a_i^2n_i^2},\qquad a_i> 0;
\end{align}
and (3) ``cusped" surface energy: $L=3$,
$G_1 ={\rm diag}(1, \delta^2, \delta^2)$,
 $G_2 ={\rm diag}(
\delta^2, 1, \delta^2)$,
$G_3 ={\rm diag}(
\delta^2, \delta^2, 1)$,
which gives the ``cusped'' surface energy
\begin{align}
\label{eq:cuspgamma}
\gamma(\vec n) = \sum_{i=1}^3\sqrt{(1-\delta^2) n_i^2 + \delta^2\Vnorm{\vec n}^2}.
\end{align}
In fact, \eqref{eq:cuspgamma} is a smooth regularization of $\gamma(\vec n)=\sum_{i=1}^3|n_i|$ by introducing a small parameter $0<\delta\ll1$. For other choices of $L$ and $G_i$, readers can refer to Ref.~\cite{Barrett08Ani} and references therein.

 With the mapping $\vec X(\cdot, t)$ given in \eqref{eq:Interfacep}, the sharp interface model of SSD with anisotropic surface energies can be derived as \cite{Jiang19c}
\begin{subequations}
 \label{eqn:animodel}
 \begin{align}
 \label{eq:animodel1}
 &\partial_t\vec X = \Delta_{s}\mu\;\vec n,\quad t \geq 0,\\
 &\mu = \nabla_{s}\cdot\bmath{\xi},\quad \bmath{\xi}(\vec n)=\sum_{i=1}^L\frac{1}{\gamma_i(\vec n)}G_i\,\vec n,
 \label{eq:animodel2}
 \end{align}
\end{subequations}
where $\mu(\vec X,~t)$ is the chemical potential and $\bmath{\xi}(\vec n)$ is the associated Cahn-Hoffman $\bmath{\xi}$-vector \cite{Hoffman72,Cahn74}. The above equations are supplemented with the contact line condition in
\eqref{eq:bd1} and the following two additional boundary conditions at the contact line $\Gamma(t)$:

(ii') The relaxed anisotropic contact angle condition
 \begin{align}
 \label{eq:anibd2}
 \partial_t\vec X_{_\Gamma} = -\eta \left(\vec c_{_\Gamma}^\gamma\cdot\vec n_{_\Gamma} - \cos\theta_Y\right)\,\vec n_{_\Gamma},\qquad t \geq 0.
 \end{align}

(iii') The anisotropic zero-mass flux condition
\begin{align}
\label{eq:anibd3}
\left(\vec c_{_\Gamma}\cdot\nabla_{s}\mu\right)|_{_\Gamma} = 0,\qquad t \geq 0;
\end{align}
where $\vec c_{_\Gamma}^\gamma$ is the anisotropic conormal vector defined as
\begin{align}
\label{eq:anicon}
\vec c_{_\Gamma}^\gamma=(\bmath{\xi}\cdot\vec n)\,\vec c_{_\Gamma} -(\bmath{\xi}\cdot\vec c_{_\Gamma})\,\vec n.
\end{align}
When $\eta\to\infty$, \eqref{eq:anibd2} will reduce to the anisotropic Young's equation: $\vec c_{_\Gamma}^\gamma\cdot\vec n_{_\Gamma} - \cos\theta_Y=0$.

To obtain the weak formulation for the above model, we shall make use of the anisotropic surface gradient defined in Appendix \ref{sec:app1}. In a similar manner as we did in the isotropic case, we choose $\bmath{v}=\bmath{g}\in\mathbb{X}$ in \eqref{eq:muform2}, use the decomposition $\vec c_{_\Gamma}^\gamma = \left(\vec c_{_\Gamma}^\gamma\cdot\vec e_z\right)\,\vec e_z + \left(\vec c_{_\Gamma}^\gamma\cdot\vec n_{_\Gamma}\right)\,\vec n_{_\Gamma}$ and the relaxed anisotropic contact angle condition in \eqref{eq:anibd2}. This gives
\begin{align*}
0&=\Bigl(\mu\,\vec n,~\bmath{g}\Bigr)_{S(t)} -\sum_{i=1}^L\left(\gamma_i(\vec n),~\left(\nabla_{s}^{\widetilde{G}_i}\,\vec X,~\nabla_{s}^{\widetilde{G}_i}\bmath{g}\right)_{\widetilde{G}_i}\right)_{S(t)} + \Bigl(\vec c_{_\Gamma}^\gamma\cdot\vec n_{_\Gamma},~\bmath{g}\cdot\vec n_{_\Gamma}\Bigr)_{\Gamma(t)}\\
&=\Bigl(\mu\,\vec n,~\bmath{g}\Bigr)_{S(t)} -\sum_{i=1}^L\left(\gamma_i(\vec n),~\left(\nabla_{s}^{\widetilde{G}_i}\,\vec X,~\nabla_{s}^{\widetilde{G}_i}\bmath{g}\right)_{\widetilde{G}_i}\right)_{S(t)}-\frac{1}{\eta}\Bigl(\partial_t\vec X_{_\Gamma}\cdot\vec n_{_\Gamma},~\bmath{g}\cdot\vec n_{_\Gamma}\Bigr)_{\Gamma(t)}\nn\\
&\hspace{1cm}+\cos\theta_Y\Bigl(\vec n_{_\Gamma},~\bmath{g}\Bigr)_{\Gamma(t)}.
\end{align*}

Collecting these results, the generalized weak formulation for anisotropic case is given as follows: for $t>0$ we use the velocity equation \eqref{eq:velocity} and seek the interface velocity $\bmath{v}(\cdot, ~t)\in \mathbb{X}$ as well as the chemical potential $\mu(\cdot,~t)\in H^1(S(t))$ via solving the following two equations
\begin{subequations}
\label{eqn:aniweakform}
\begin{align}
\label{eq:aniweakform1}
&\Bigl(\bmath{v}\cdot\vec n,~\psi\Bigr)_{S(t)} + \Bigl(\nabla_{s}\mu,~\nabla_{s}\psi\Bigr)_{S(t)} =0\quad\forall \psi\in H^1(S(t)),\\
\label{eq:aniweakform2}
&\Bigl(\mu\,\vec n,~\bmath{g}\Bigr)_{S(t)}-\sum_{i=1}^L\left(\gamma_i(\vec n),~\left(\nabla_{s}^{\widetilde{G}_i}\vec X,~\nabla_{s}^{\widetilde{G}_i}\bmath{g}\right)_{\widetilde{G}_i}\right)_{S(t)}-\frac{1}{\eta}\Bigl(\bmath{v}\cdot\vec n_{_\Gamma},~\bmath{g}\cdot\vec n_{_\Gamma}\Bigr)_{\Gamma(t)}\\
&\hspace{2.5cm}+\cos\theta_Y\Bigl(\vec n_{_\Gamma},~\bmath{g}\Bigr)_{\Gamma(t)} = 0\quad \forall\bmath{g}\in \mathbb{X}.\nn
\end{align}
\end{subequations}

For the above weak formulation \eqref{eqn:aniweakform}, we have
\begin{prop}[Mass conservation and energy dissipation] Let $\left(\vec X,~\bmath{v},~ \mathcal{H}\right)$ be a solution of the weak formulation \eqref{eqn:aniweakform} and \eqref{eq:velocity}, then the total mass of the thin film is conserved, i.e.,
\begin{align}
|\Omega(t)|\equiv|\Omega(0)|,\qquad t \geq 0,
\end{align}
and the total free energy of the system defined in \eqref{eq:anitotalW} is decreasing, i.e.,
\begin{align}
\label{eq:eedis2}
W_\gamma(t)\leq W_\gamma(t^\prime) \leq W_\gamma(0)=\sum_{i=1}^L\int_{S(0)}\gamma_i(\vec n(\vec X,0))\,\rd A-\cos\theta_Y|S_1(0)|,\qquad \forall t \geq t^\prime  \geq 0.
\end{align}
\end{prop}
\begin{proof}
The proof of the mass conservation is the same as the that in Proposition \ref{prop:isomassenergy}. For the energy dissipation, by noting \eqref{eq:anirey} and \eqref{eq:anitotalW}, we have
\begin{align}
\label{eq:aniim}
\frac{\rd }{\rd t}W_\gamma(t)= \sum_{i=1}^L\int_{S(t)}\gamma_i(\vec n)\left(\nabla_{s}^{\widetilde{G}_i}\vec X,~\nabla_{s}^{\widetilde{G}_i}\bmath{v}\right)_{\widetilde{G}_i}\,\rd A - \cos\theta_Y\Bigl(\vec n_{_\Gamma},~\bmath{v}\Bigr)_{\Gamma(t)}.
\end{align}
Setting $\psi=\mu$ in \eqref{eq:aniweakform1} and $\bmath{g}=\bmath{v}$ in \eqref{eq:aniweakform2}, and using \eqref{eq:aniim}, we obtain
\begin{align}
\frac{\rd }{\rd t}W_\gamma(t)=-\Big(\nabla_s\mu,~\nabla_s\mu\Bigr)_{S(t)}-\frac{1}{\eta}\Bigl(\bmath{v}\cdot\vec n_{_\Gamma},~\bmath{v}\cdot\vec n_{_\Gamma}\Bigr)_{\Gamma(t)}\leq 0,
\end{align}
which implies the energy dissipation.
\end{proof}

\subsection{The ES-PFEM and its properties}
\label{ssec:anidis}

Following the discretization in \cref{ssec:discre}, we propose an ES-PFEM as the full discretization of the weak formulation \eqref{eqn:aniweakform} as follows: Given the polygonal surface $S^0:=\vec X^0(\cdot)\in \mathbb{X}^0$, for $m \geq 0$ we find $S^{m+1}:=\vec X^{m+1}(\cdot)\in\mathbb{X}^m$ and the chemical potential $\mu^{m+1}(\cdot)\in \mathbb{M}^m$ via solving the following two equations
\begin{subequations}
\label{eqn:anipfem}
\begin{align}
\label{eq:anipfem1}
&\Bigl(\frac{\vec X^{m+1} - \vec X^m}{\tau},~\vec n^m\,\psi^h\Bigr)_{S^m}^h + \Bigl(\nabla_{s}\mu^{m+1},~\nabla_{s}\psi^h\Bigr)_{S^m} = 0\quad\forall \psi^h \in \mathbb{M}^m,\\[0.6em]
&\Bigl(\mu^{m+1}\,\vec n^m,~\bmath{g}^h\Bigr)_{S^m}^h-\sum_{i=1}^L\left(\gamma_i(\vec n^m),~\left(\nabla_{s}^{\widetilde{G}_i}\,\vec X^{m+1},~\nabla_{s}^{\widetilde{G}_i}\bmath{g}^h\right)_{\widetilde{G}_i}\right)_{S^m}+\cos\theta_Y\,\Bigl(\vec n_{_\Gamma}^{m+\frac{1}{2}},~\bmath{g}^h\Bigr)_{\Gamma^m}\nn\\
&\hspace{2cm}-\frac{1}{\eta\,\tau}\Bigl([\vec X_{_\Gamma}^{m+1} - \vec X_{_\Gamma}^m]\cdot\vec n_{_\Gamma}^m,~~\bmath{g}^h\cdot\vec n_{_\Gamma}^m\Bigr)_{\Gamma^m} = 0\quad \forall\bmath{g}^h\in \mathbb{X}^m,\label{eq:anipfem2}
\end{align}
\end{subequations}
where $\nabla_{s}$ and $\nabla_{s}^{\widetilde{G}_i}$ are defined on $S^m$, and $\vec n_{\Gamma}^{m+\frac{1}{2}}$ is defined in \eqref{eq:nsemi}. In practical computation,  $\nabla_{s}^{\widetilde{G}}f$ on a typical triangle $\sigma=\Delta\{\vec q_k\}_{k=1}^3$ is computed by using the definition \eqref{eq:anigradient} as
\begin{align*}
\nabla_{s}^{\widetilde{G}}f|_{\sigma}:= (\nabla_{s}f\cdot\vec t_1)\,\vec t_1 + (\nabla_{s}f\cdot\vec t_2)\,\vec t_2,
\end{align*}
where $\vec t_1$ and $\vec t_2$ are parallel to the triangle $\sigma$ and satisfy \eqref{eq:ortho}. In the case when $L=1$ and $G_1=\mathbb{I}$, the numerical scheme \eqref{eqn:anipfem} reduces to the scheme  \eqref{eqn:isopfem}.

For the discretization in \eqref{eqn:anipfem}, similar to the proof of Theorem \ref{th:wellp} (with details omitted for brevity),
we have:
\smallskip
\begin{thm}[Existence and uniqueness] Assume that $S^m$ satisfies:
\begin{itemize}
\item[(i)] $\min_{1 \leq j \leq N}|\sigma_j^m|>0$;
\item [(ii)] ${\rm dim}\{\vec n_{_\Gamma,_j}^m\}_{j=1}^{N_{_\Gamma}}=2$;
\item [(iii)] there exists a vertex $\vec q_{k_0}$ of the polygonal curve $\Gamma^m$ such that $\bmath{\omega}_{k_0}^m=(w^m_{k_0,1},~w_{k_0,2}^m,~w_{k_0,3}^m)^T$ satisfies $\left(w_{k_0,1}^m\right)^2+\left(w_{k_0,2}^m\right)^2>0$.
\end{itemize}
If $\cos\theta_Y=0$, i.e. $\theta_Y=\frac{\pi}{2}$, then the linear system in \eqref{eqn:anipfem} admits a unique solution.
\end{thm}

Denote
\begin{align}
\label{eq:anidisW}
W_\gamma^m:=\int_{S^m}\gamma(\vec n^m)\,\dA - \cos\theta_Y|S_1^m|=\sum_{j=1}^N\gamma(\vec n_j^m)|\sigma_j^m| - \cos\theta_Y|S_1^m|,
\end{align}
where $\gamma(\vec n)$ is defined in \eqref{eq:aniso}. Similar to the previous work in \cite{Barrett08Ani}, we can prove:
\begin{thm}\label{th:aniEnergyS} Let $\Bigl(\vec X^{m+1},~\mu^{m+1}\Bigr)$ be the numerical solution of \eqref{eqn:anipfem}, then the total energy of the system decrease in time, i.e.,
\begin{align}
W^{m+1}_\gamma  \leq W^m_\gamma \leq W^0_\gamma=\sum_{i=1}^L \sum_{j=1}^N\gamma_i(\vec n_j^0)|\sigma_j^0| -\cos\theta_Y|S_1^0|,\quad\forall m \geq 0.
\label{eq:aniEnergyS}
\end{align}
Moreover, we have
\begin{align}
&\sum_{l=1}^{m+1}\Vnorm{\nabla_{s}\mu^{l}}^2_{S^{l-1}} + \frac{1}{\eta}\sum_{l=1}^{m+1}\Vnorm{\left(\frac{\vec X^{l}_{_\Gamma} - \vec X^{l-1}_{_\Gamma}}{\tau}\right)\cdot\vec n^{l-1}_{_\Gamma}}^2_{\Gamma^{l-1}}  \leq \frac{W_\gamma^0-W^{m+1}_\gamma}{\tau},\quad \forall m  \geq 0,
\label{eq:aniEnergySW}
\end{align}
\end{thm}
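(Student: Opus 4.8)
The plan is to follow the structure of the proof of Theorem~\ref{th:EnergyS}, replacing the isotropic Dirichlet form by the weighted anisotropic forms appearing in \eqref{eq:anipfem2}. For brevity write
\begin{align*}
a_i^m(\vec U,~\vec V):=\Bigl(\gamma_i(\vec n^m),~\bigl(\nabla_{_S}^{\widetilde{G}_i}\vec U,~\nabla_{_S}^{\widetilde{G}_i}\vec V\bigr)_{\widetilde{G}_i}\Bigr)_{S^m},
\end{align*}
which, since each $\widetilde{G}_i$ is symmetric positive definite and $\gamma_i(\vec n^m)>0$, is a symmetric positive semi-definite bilinear form; in particular $a_i^m(\vec U,~\vec U)\ge0$. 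First I would test \eqref{eq:anipfem1} with $\psi^h=\tau\,\mu^{m+1}$ and \eqref{eq:anipfem2} with $\bmath{g}^h=\vec X^{m+1}-\vec X^m$, and combine them by subtracting the second from the first. As in the isotropic case the mass-lumped terms $\bigl(\mu^{m+1}\vec n^m,~\vec X^{m+1}-\vec X^m\bigr)_{S^m}^h$ cancel, and after using Lemma~\ref{lem:energy1} to rewrite the $\cos\theta_i$ boundary term I am left with
\begin{align*}
&\tau\Vnorm{\nabla_{_S}\mu^{m+1}}_{S^m}^2 + \sum_{i=1}^L a_i^m\bigl(\vec X^{m+1},~\vec X^{m+1}-\vec X^m\bigr) \\
&\qquad - \cos\theta_i\bigl(|S_1^{m+1}|-|S_1^m|\bigr) + \frac{1}{\eta\tau}\Vnorm{(\vec X_{_\Gamma}^{m+1}-\vec X_{_\Gamma}^m)\cdot\vec n_{_\Gamma}^m}_{\Gamma^m}^2 = 0.
\end{align*}

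For the anisotropic bulk term I would apply, for each $i$, the polarization identity $a_i^m(\vec X^{m+1},\vec X^{m+1}-\vec X^m)=\tfrac12 a_i^m(\vec X^{m+1},\vec X^{m+1})-\tfrac12 a_i^m(\vec X^m,\vec X^m)+\tfrac12 a_i^m(\vec X^{m+1}-\vec X^m,\vec X^{m+1}-\vec X^m)$ and discard the nonnegative last term. It then remains to establish two facts: (a) $\tfrac12\sum_i a_i^m(\vec X^m,\vec X^m)=\int_{S^m}\gamma(\vec n^m)\,\dA$; and (b) $\tfrac12\sum_i a_i^m(\vec X^{m+1},\vec X^{m+1})\ge\int_{S^{m+1}}\gamma(\vec n^{m+1})\,\dA$. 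Fact (a) is immediate: on $S^m$ the map $\vec X^m$ is the identity, so $\partial_{\vec t_k}\vec X^m=\vec t_k$ and \eqref{eq:ortho} gives $\bigl(\nabla_{_S}^{\widetilde{G}_i}\vec X^m,~\nabla_{_S}^{\widetilde{G}_i}\vec X^m\bigr)_{\widetilde{G}_i}=\sum_{k=1}^2\vec t_k\cdot(\widetilde{G}_i\vec t_k)=2$, whence $\tfrac12 a_i^m(\vec X^m,\vec X^m)=\int_{S^m}\gamma_i(\vec n^m)\,\dA$ and summation over $i$ yields (a). Together, (a) and (b) give $\sum_i a_i^m(\vec X^{m+1},\vec X^{m+1}-\vec X^m)\ge\int_{S^{m+1}}\gamma(\vec n^{m+1})\,\dA-\int_{S^m}\gamma(\vec n^m)\,\dA$.

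Fact (b) is the heart of the matter and the step I expect to be the main obstacle. I would reduce it to a triangle-by-triangle estimate: on each $\sigma_j^m$ the restriction of $\vec X^{m+1}$ is affine, mapping $\sigma_j^m$ (with normal $\vec n_j^m$) onto $\sigma_j^{m+1}$ (with normal $\vec n_j^{m+1}$), and one needs $\tfrac12\gamma_i(\vec n_j^m)\int_{\sigma_j^m}\bigl(\nabla_{_S}^{\widetilde{G}_i}\vec X^{m+1},~\nabla_{_S}^{\widetilde{G}_i}\vec X^{m+1}\bigr)_{\widetilde{G}_i}\,\dA\ge\gamma_i(\vec n_j^{m+1})\,|\sigma_j^{m+1}|$. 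This is precisely the anisotropic stability estimate of \cite{Barrett08Ani} (the analog of Lemma~2.1 in \cite{Barrett08JCP}); it relies crucially on the determinant normalization $\widetilde{G}_i=(\det G_i)^{-1/2}G_i$, which forces $\det\widetilde{G}_i=1$ so that the $\widetilde{G}_i$-area element is compatible with $\gamma_i$, and it follows from an arithmetic--geometric mean inequality applied to the singular values of the affine map between the two triangles measured in the $\widetilde{G}_i$-metric. Rather than reprove it, I would invoke and adapt the corresponding result of \cite{Barrett08Ani}.

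Finally, inserting facts (a)--(b) into the combined identity and recalling $W^m$ from \eqref{eq:anidisW}, I obtain
\begin{align*}
W^{m+1}+\tau\Vnorm{\nabla_{_S}\mu^{m+1}}_{S^m}^2+\frac{1}{\eta\tau}\Vnorm{(\vec X_{_\Gamma}^{m+1}-\vec X_{_\Gamma}^m)\cdot\vec n_{_\Gamma}^m}_{\Gamma^m}^2\le W^m,
\end{align*}
which gives \eqref{eq:aniEnergyS} at once. Dividing by $\tau$, rewriting the boundary contribution as a squared difference quotient, and summing over $l=1,\dots,m+1$ telescopes the quantities $W^{l-1}-W^l$ and yields \eqref{eq:aniEnergySW}.
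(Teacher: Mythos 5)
Your proposal is correct and follows essentially the same route as the paper's proof: test with $\tau\,\mu^{m+1}$ and $\vec X^{m+1}-\vec X^m$, expand the anisotropic form via the identity $a(a-b)=\tfrac12(a^2-b^2+(a-b)^2)$, handle the substrate term with Lemma~\ref{lem:energy1}, and invoke Lemma~3.1 of \cite{Barrett08Ani} for exactly your facts (a) and (b). The only difference is that you spell out the elementary computation behind fact (a), which the paper simply cites.
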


\begin{proof}
Setting $\psi^h=\mu^{m+1}$ in \eqref{eq:anipfem1} and $\bmath{g}^h=\vec X^{m+1}-\vec X^m$ in \eqref{eq:anipfem2}, and combining these two equations we obtain
\begin{align}
\label{eq:anienergy1}
&\tau\,\Vnorm{\nabla_{s}\mu^{m+1}}_{S^m}^2 -\cos\theta_Y\Bigl(\vec n_{_\Gamma}^{m+\frac{1}{2}},~[\vec X_{_\Gamma}^{m+1} - \vec X_{_\Gamma}^m]\Bigr)_{\Gamma^m}+\frac{1}{\eta\,\tau}\Vnorm{(\vec X_{_\Gamma}^{m+1} - \vec X_{_\Gamma}^m)\cdot\vec n_{_\Gamma}^m}_{\Gamma^m}^2 \nn\\
&\hspace{2cm} + \sum_{i=1}^L\left(\gamma_i(\vec n^m),~\left(\nabla_{s}^{\widetilde{G}_i}\,\vec X^{m+1},~\nabla_{s}^{\widetilde{G}_i}\left(\vec X^{m+1}-\vec X^m\right)\right)_{\widetilde{G}_i}\right)_{S^m} =0.
\end{align}

Using the identity $a(a-b) = \frac{1}{2}\left(a^2-b^2+(a-b)^2\right)$, we obtain
\begin{align}
&\left(\gamma_i(\vec n^m),~\left(\nabla_{s}^{\widetilde{G}_i}\,\vec X^{m+1},~\nabla_{s}^{\widetilde{G}_i}\left(\vec X^{m+1}-\vec X^m\right)\right)_{\widetilde{G}_i}\right)_{S^m}\nn \\
\label{eq:anienergy2}
&=\frac{1}{2}\left(\gamma_i(\vec n^m),~\Vnorm{\nabla_{s}^{\widetilde{G}_i}\,\vec X^{m+1}}_{\widetilde{G}}^2 - \Vnorm{\nabla_{s}^{\widetilde{G}_i}\,\vec X^{m}}_{\widetilde{G}}^2 + \Vnorm{\nabla_{s}^{\widetilde{G}_i}\left(\vec X^{m+1}-\vec X^m\right)}_{\widetilde{G}}^2\right)_{S^m}\\
& \geq \int_{S^{m+1}}\gamma_i(\vec n^{m+1})\,\dA - \int_{S^m}\gamma_i(\vec n^m)\,\dA,\nn
\end{align}
where $\Vnorm{\cdot}_{\widetilde{G}}$ is the induced norm of the inner product in \eqref{eq:aniinner}, and the last inequality is a direct application of \eqref{eq:asoieq}:
\begin{subequations}
\begin{align}
&\frac{1}{2}\left(\gamma_i(\vec n^m),~\Vnorm{\nabla_{s}^{\widetilde{G}_i}\,\vec X^{m+1}}_{\widetilde{G}}^2\right)_{S^m} \geq  \int_{S^{m+1}}\gamma_i(\vec n^{m+1})\,\dA,\\
&\frac{1}{2}\left(\gamma_i(\vec n^m),~\Vnorm{\nabla_{s}^{\widetilde{G}_i}\,\vec X^{m}}_{\widetilde{G}}^2\right)_{S^m} = \int_{S^{m}}\gamma_i(\vec n^{m})\,\dA.
\end{align}
\end{subequations}
Substituting \eqref{eq:anienergy2},  \eqref{eq:aniso} and \eqref{eq:anidisW} into \eqref{eq:anienergy1}, and also noting the equality \eqref{eq:subenergy} yields 
\begin{align}
W^{m+1}_\gamma + \tau\Vnorm{\nabla_{s}\mu^{m+1}}_{S^m}^2 + \frac{1}{\eta\tau}\Vnorm{\left(\vec X_{_\Gamma}^{m+1} - \vec X_{_\Gamma}^m\right)\cdot\vec n_{_\Gamma}^m}^2_{\Gamma^m} \leq W_\gamma^m,\qquad m \geq 0.
\label{eq:anienergy4}
\end{align}

Reformulating \eqref{eq:anienergy4} as
\begin{align*}
\Vnorm{\nabla_{s}~\mu^l}_{S^{l-1}}^2 + \frac{1}{\eta}\,\Vnorm{\left(\frac{\vec X_{_\Gamma}^{l} - \vec X_{_\Gamma}^{l-1}}{\tau}\right)\cdot\vec n_{_\Gamma}^{l-1}}_{\Gamma^{l-1}}^2 \leq\frac{W_\gamma^{l-1}-W_\gamma^{l}}{\tau},\qquad l \geq 1,
\end{align*}
summing up for $l=1,2,\cdots,m+1$ yields \eqref{eq:aniEnergySW}.
\end{proof}

\section{Numerical results}
\label{sec:numr}

In this section, we first present the convergence tests of our method in \eqref{eqn:isopfem} and \eqref{eqn:anipfem}, and then report numerical examples to demonstrate the morphological features in SSD. The resulting linear system in \eqref{eqn:isopfem} and \eqref{eqn:anipfem} can be efficiently solved via the Schur complement method discussed in \cite{Barrett08JCP} or simply GMRES method with preconditioners based on incomplete LU factorization. The contact line mobility $\eta$ in \eqref{eq:bd2} controls the relaxation rate of the dynamic contact angle to the equilibrium angle, and large $\eta$ accelerates the relaxation process \cite{Wang15, Zhao19b}. In what follows, we will always choose $\eta=100$ unless otherwise stated.

\subsection{Convergence tests}
\label{ssec:convet}

\begin{figure}[!htp]
\centering
\includegraphics[width=0.7\textwidth]{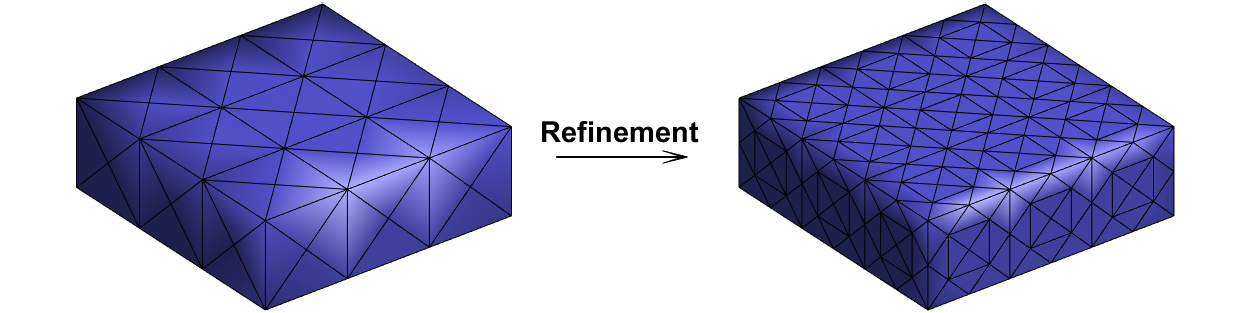}
\caption{The interface partition of a $(3,3,1)$ cuboid with mesh size $h=0.5$ (left panel), and a refined polygonal surface mesh with mesh size $h=0.25$ (right panel) obtained by subdividing each triangle into 4 smaller triangles.}
\end{figure}

We test the convergence rate of the numerical methods in \eqref{eqn:isopfem} and \eqref{eqn:anipfem} by carrying out numerical simulations under different mesh sizes and time step sizes. Initially, the island film is chosen as a cuboid island with (3, 3, 1) representing its length, width and height. The region occupied by the initial thin film is then given by $[-1.5,~1.5]\times[-1.5,~ 1.5]\times[0,~1]$. Let $S^0:=\cup_{j=1}^N\overline{\sigma_j^0}$ be the initial partition of $S(0)$ and $\vec X_{h,\tau}(\cdot, t_m)$ be the numerical solution of the interface $S(t_m)$ obtained using mesh size $h=\max_{j=1}^N\sqrt{|\sigma_j^0|}$ and time step size $\tau$. Then we measure the error of the numerical solutions by comparing $\vec X_{h,\tau}$ and $\vec X_{\frac{h}{2},\frac{\tau}{4}}$.

To measure the difference between two polygonal surfaces given by
\begin{subequations}
\begin{align}
S:=\cup_{j=1}^N\overline{\sigma_j}\quad{\rm with\; vertices} \quad \left\{\vec q_k\right\}_{k=1}^K,\\
S^\prime=\cup_{j=1}^{N^\prime}\overline{\sigma^\prime_j}\quad{\rm  with\;vertices} \quad \left\{\vec q_k^\prime\right\}_{k=1}^{K^\prime},
\end{align}
\end{subequations}
we define the following manifold distance
\begin{align}
\label{eq:manifold}
\mathcal{M}\left(S,~S^\prime\right) = \frac{1}{2}\left(\max_{1 \leq k \leq K^\prime}\min_{1 \leq j \leq N}\,{\rm dist\left(\vec q_k^\prime,~\sigma_j\right)} + \max_{1 \leq k \leq K}\min_{1 \leq j \leq N^\prime}\,{\rm dist\left(\vec q_k,~\sigma_j^\prime\right)}\right),
\end{align}
where ${\rm dist}(\vec q,~\sigma)=\inf_{\vec p\in\sigma}\Vnorm{\vec p -\vec q}$ represents the distance of the vertex $\vec q$ to the triangle $\sigma$.

We fix $\theta_Y=2\pi/3$ and consider the isotropic surface energies and ellipsoidal surface energies in \eqref{eq:ellip} with $a_1=2,a_2=1,a_3=1$. The numerical errors are computed based on the manifold distance \eqref{eq:manifold} by
\begin{align}
e_{h,\tau}(t_m):=\mathcal{M}\left(\vec X_{h,\tau},~\vec X_{\frac{h}{2},\frac{\tau}{4}}\right),\quad m \geq 0.
\end{align}
Numerical errors for the two cases are reported in Table \ref{tb:isoerror}.  We observe that the error decreases with refined mesh size and time step size, and the order of convergence can reach about 2 for spatial discretization.

\begin{table}[!t]
\centering
\def\temptablewidth{0.85\textwidth}
\vspace{-1pt}
\caption{Error $e_{h,\tau}$ and the rate of convergence for the dynamic interface under the isotropic surface energy (upper panel) and the ellipsoidal surface energy \eqref{eq:ellip} with $a_1=2,a_2=1,a_3=1$ (lower panel) at three different times. Other parameters are chosen as $\theta_Y=2\pi/3$, $h_0=0.5$ and $\tau_0=0.01$.}
\label{tb:isoerror}
{\rule{\temptablewidth}{1pt}}
\begin{tabular*}{\temptablewidth}{@{\extracolsep{\fill}}c|cc|cc|cc}
$(h,\,\tau)$ &$e_{h,\tau}(t=0.5)$ & order &$e_{h,\tau}(t=1.0)$
& order &$e_{h,\tau}(t=2.0)$ & order  \\ \hline
$(h_0,\,\tau_0)$ & 8.17E-2 & - &7.19E-2 &-& 6.61E-2 &- \\ \hline
$(\frac{h_0}{2},\, \frac{\tau_0}{2^2})$ & 2.05E-2 & 1.99 &1.71E-2 &2.07& 1.71E-2 &1.95
\\ \hline
$(\frac{h_0}{2^2},\,\frac{\tau_0}{2^4})$ & 4.80E-3 & 2.07 &4.85E-3 &1.82& 5.20E-3 &1.72
 \end{tabular*}
{\rule{\temptablewidth}{1pt}}
{\rule{\temptablewidth}{1pt}}
\begin{tabular*}{\temptablewidth}{@{\extracolsep{\fill}}c|cc|cc|cc}
$(h,\,\tau)$ &$e_{h,\tau}(t=0.5)$ & order &$e_{h,\tau}(t=1.0)$
& order &$e_{h,\tau}(t=2.0)$ & order  \\ \hline
$(h_0,\,\tau_0)$ & 8.03E-2 & - &7.93E-2 &-& 7.85E-2 &- \\ \hline
$(\frac{h_0}{2},\, \frac{\tau_0}{2^2})$ & 2.03E-2 & 1.98 &2.15E-2 &1.88& 2.25E-2 &1.80
\\ \hline
$(\frac{h_0}{2^2},\,\frac{\tau_0}{2^4})$ & 5.31E-3 & 1.93 &5.46E-3 &2.09& 5.45E-3 &2.05
 \end{tabular*}
{\rule{\temptablewidth}{1pt}}
\end{table}

\begin{figure}[tph]
\centering
\includegraphics[width=0.95\textwidth]{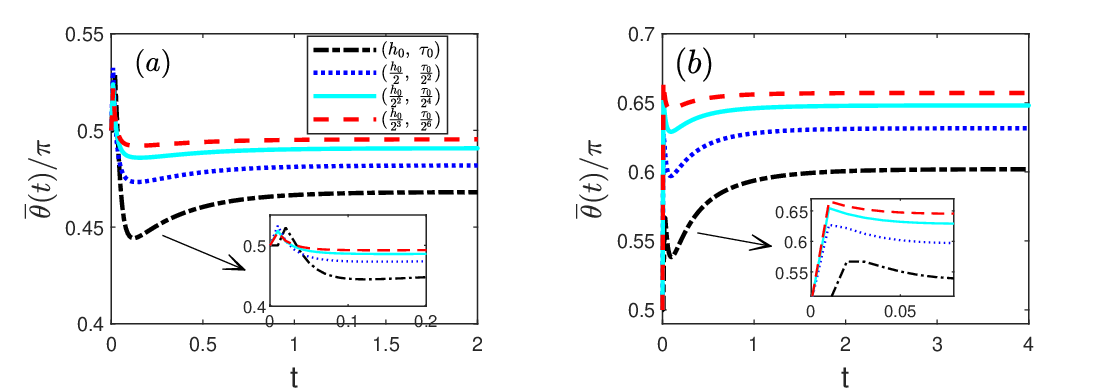}
\caption{The time history of the average contact angle defined in \eqref{eq:avT} by using different mesh sizes and time step sizes, where $h_0=0.5$ and $\tau_0=0.01$. (a) $\theta_Y=\pi/2$; (b) $\theta_Y=2\pi/3$.}
\label{fig:angleE}
\end{figure}

 \begin{table}[tph]
 \centering
\def\temptablewidth{0.70\textwidth}
\vspace{-1pt}
\caption{Numerical errors between the numerical equilibrium contact angle (at time $t=10$) and the Young's angle $\theta_Y$ for isotropic surface energy, where $h_0=0.5$, $\tau_0=0.01$.}
\label{tb:angle}
{\rule{\temptablewidth}{1pt}}
\begin{tabular*}{\temptablewidth}{@{\extracolsep{\fill}}c|cc|cc}
 \multirow{2}{*}{$(h, \,\tau)$} & \multicolumn{2}{c|}{$\theta_Y=\pi/2$}
&\multicolumn{2}{c}{$\theta_Y=2\pi/3$}\\ \cline{2-5}
  &$|\overline{\theta}-\theta_Y|(t=10) $ & order &$|\overline{\theta}-\theta_Y|(t=10)$
& order \\ \hline
$(h_0,\,\tau_0)$ & 1.00E-1 & - &2.03E-1 &- \\ \hline
$(\frac{h_0}{2},\, \frac{\tau_0}{2^2})$ & 5.70E-2 & 0.81 &1.10E-1 &0.88
\\ \hline
$(\frac{h_0}{2^2},\,\frac{\tau_0}{2^4})$ & 2.90E-2 & 0.97 &5.72E-2 &0.95 \\ \hline
$(\frac{h_0}{2^3},\,\frac{\tau_0}{2^6})$ & 1.44E-2 & 1.05 &2.98E-2 &0.94 \\
 \end{tabular*}
{\rule{\temptablewidth}{1pt}}
\end{table}

To further assess the accuracy of the numerical method in \eqref{eqn:isopfem}, we define the following average contact angle $\overline{\theta}$
\begin{align}
\label{eq:avT}
\left.\overline{\theta}\right|_{t=t_m}:=\frac{1}{N_{_\Gamma}}\sum_{j=1}^{N_{_\Gamma}}\arccos\left(\vec c_{_{\Gamma,j}}^m\cdot\vec n_{_{\Gamma,j}}^m\right),
\end{align}
where $\vec c_{_\Gamma, _j}^m$ and $\vec n_{_\Gamma, _j}^m$ are numerical approximations of $\vec c_{_\Gamma}$ and $\vec n_{_\Gamma}$ at $j$th line segment of $\Gamma^m$, respectively. The time history of the average contact angles computed using different mesh sizes and time step sizes for $\theta_Y=\pi/2$ and $\theta_Y=2\pi/3$ are presented in Fig.~\ref{fig:angleE}. We observe the convergence of the dynamic contact angle as the mesh is refined. A more quantitative assessment for the average contact angel $\overline{\theta}$ is provided in Table.~\ref{tb:angle}, where we show the error between $\overline{\theta}$ for the equilibrium state ($t=10$) and the Young's angle $\theta_Y$. We observe the error decreases as mesh is refined, and the convergence rate for $|\overline{\theta}-\theta_Y|$ is about 1.

\subsection{Applications}
\label{ssec:app}

\begin{figure}[!t]
\centering
\includegraphics[width=0.8\textwidth]{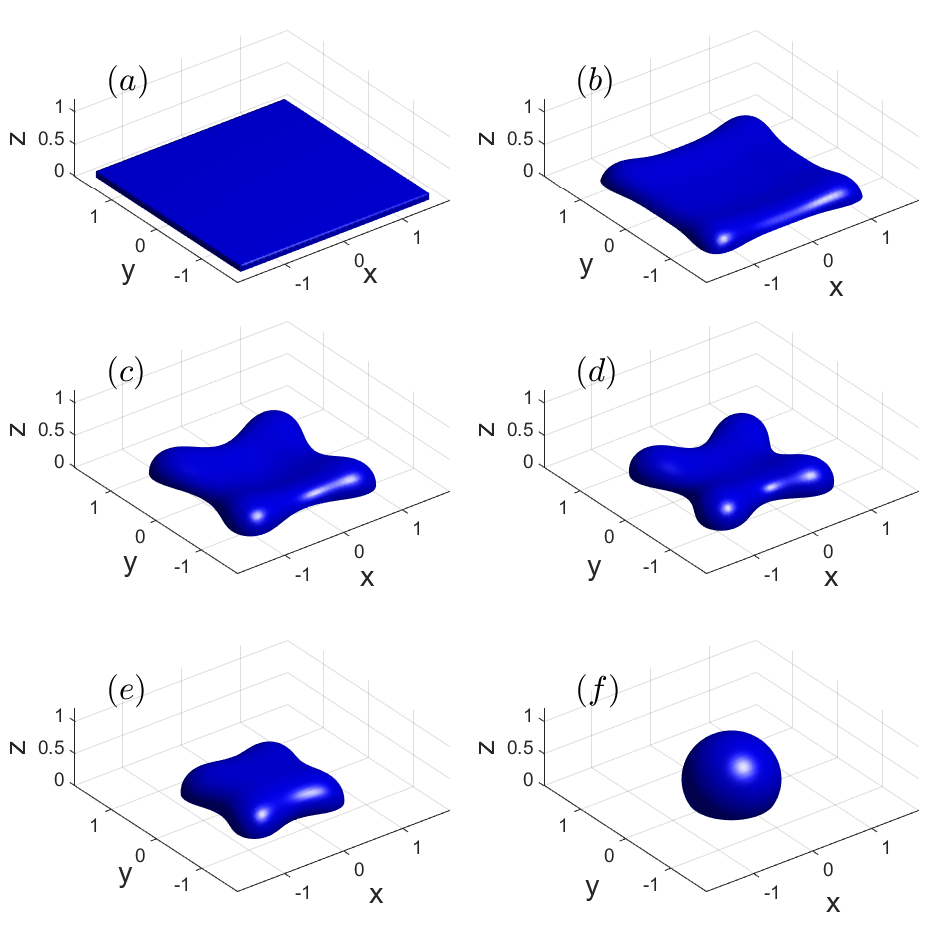}
\caption{Several snapshots in the evolution of an initially square island film towards its equilibrium shape under the isotropic surface energy: (a) $t = 0$; (b) $t = 0.004$; (c) $t = 0.008$; (d)
$t = 0.012$; (e) $t = 0.020$; (f) $t = 0.080$. The initial shape is chosen as a (3.2, 3.2, 0.1) cuboid,
and $\theta_Y=2\pi/3$.}
\label{fig:isoe}
\end{figure}

We present several numerical examples to demonstrate the anisotropic effects on the morphological evolution of thin films in SSD.

\vspace{0.2cm}
\noindent {\bf Example 1.}
In this example, we consider the evolution of square island films under: (1) isotropic surface energy $\gamma(\vec n)=1$; (2) ellipsoidal surface energy in \eqref{eq:ellip} with $a_1=2, a_2=1, a_3=1$; and (3) ``cusped'' surface energy in \eqref{eq:cuspgamma} with $\delta = 0.1$. The initial film is chosen as a $(3.2,3.2,0.1)$ cuboid island. The interface is partitioned into $N=18432$ triangles with $K=9345$ vertices, and we take $\tau= 1\times 10^{-4}$, $\theta_Y=2\pi/3$.

Several snapshots of the morphological evolutions for the thin film under the three anisotropies are shown in Fig.~\ref{fig:isoe}, \ref{fig:ellie} and \ref{fig:cuspe}, respectively. In the isotropic case, we observe the four corners of the square island retract much more slowly than that of the four edges at the beginning, thus forming a cross-shaped geometry. As time evolves, the island finally forms a perfectly spherical shape as equilibrium. For ellipsoidal surface energy, the square island forms a grooved shape and then reach an ellipsoidal shape as equilibrium. In the case of ``cusped'' surface energy, the island maintains a self-similar cuboid shape by gradually decreasing its  length, width and increasing its height until reaching the equilibrium state.

\begin{figure}[!t]
\centering
\includegraphics[width=0.8\textwidth]{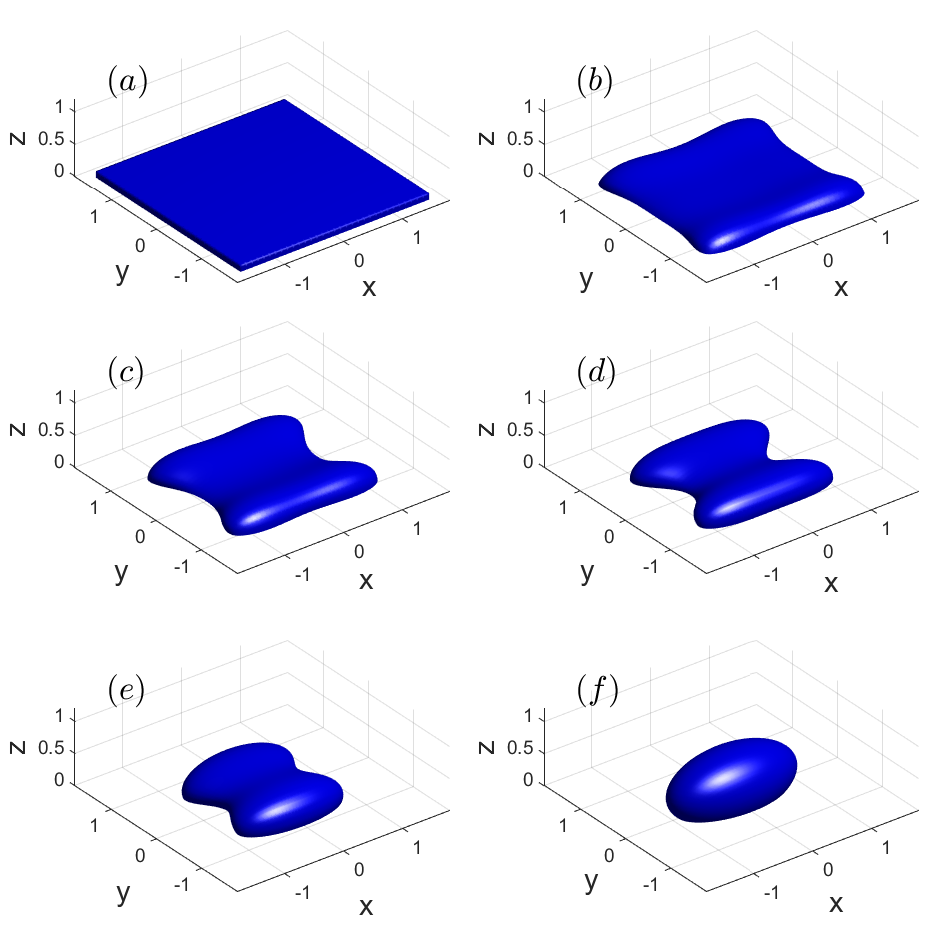}
\caption{Several snapshots in the evolution of an initially square island film towards its equilibrium shape under the ellipsoidal surface energy: (a) $t = 0$; (b) $t = 0.004$; (c) $t = 0.008$; (d)
$t = 0.012$; (e) $t = 0.020$; (f) $t = 0.080$. The initial shape is chosen as a (3.2, 3.2, 0.1) cuboid,
and $\theta_Y=2\pi/3$. The surface energy is chosen in \eqref{eq:ellip} with $a_1=2, a_2=1,a_3=1$.}
\label{fig:ellie}
\end{figure}

\begin{figure}[tph]
\centering
\includegraphics[width=0.8\textwidth]{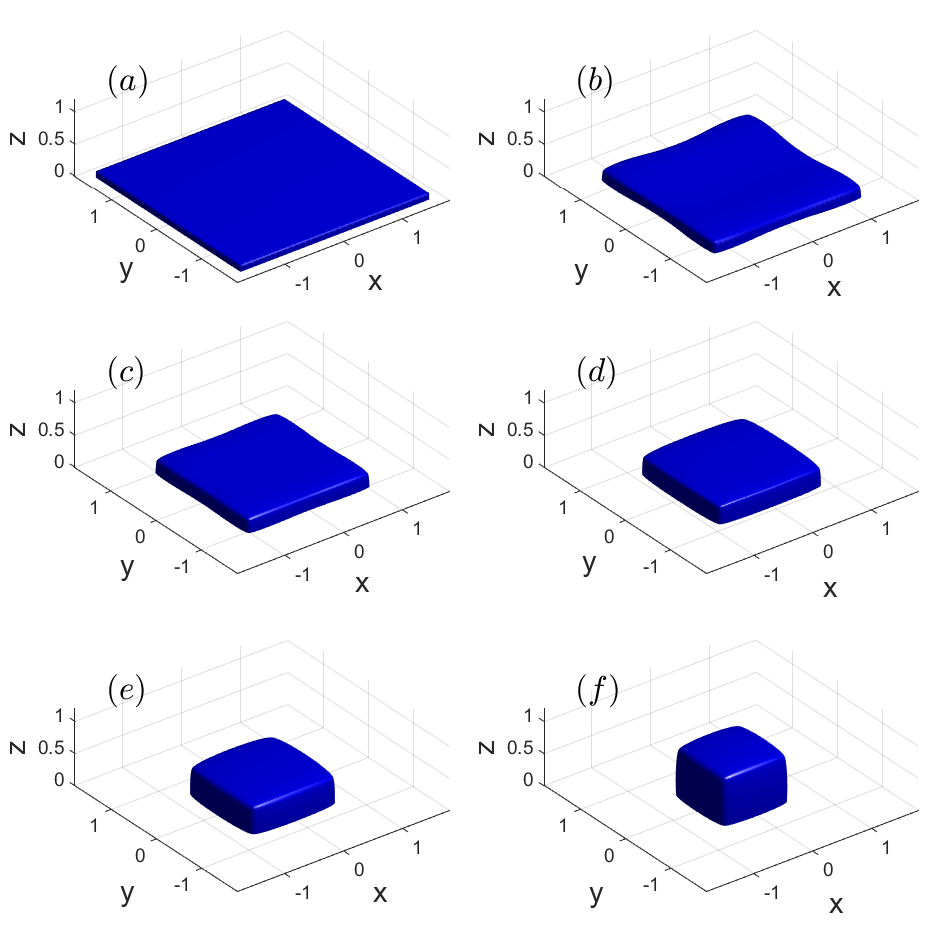}
\caption{Several snapshots in the evolution of an initially square island film towards its equilibrium shape under the ``cusped'' surface energy: (a) $t = 0$; (b) $t = 0.004$; (c) $t = 0.008$; (d)
$t = 0.012$; (e) $t = 0.020$; (f) $t = 0.080$, where the initial shape is chosen as a (3.2, 3.2, 0.1) cuboid,
and $\theta_Y=2\pi/3$. The surface energy is chosen in \eqref{eq:cuspgamma} with $\delta=0.1$.}
\label{fig:cuspe}
\end{figure}

\begin{figure}[!tph]
\centering
\includegraphics[width=0.8\textwidth]{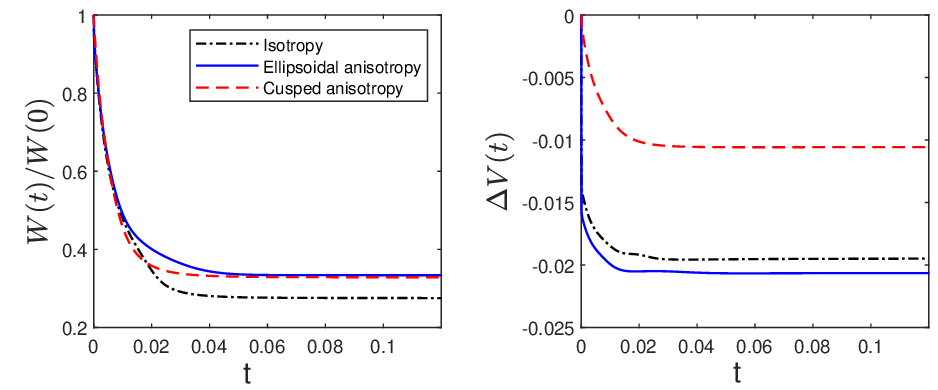}
\caption{The temporal evolution of the normalized energy $W(t)/W(0)$ (left panel) and the relative volume loss $\Delta V(t)$ (right panel) by using the isotropic surface energy, the ellipsoidal surface energy and cusped surface energy.}
\label{fig:WVE}
\end{figure}

In Fig.~\ref{fig:WVE}, we plot the temporal evolution of the normalized energy  $W(t)/W(0)$ and the relative volume loss $\Delta V$ defined as
\begin{align}
\Delta V:=\frac{V^h(t)-V^h(0)}{|V^h(0)|},\qquad t \geq 0,
\end{align}
where $V^h(0)$ is the total volume of the initial shape. We observe the total free energy of the discrete system decays in time, and the relative volume loss is about 1\% to 2\%. We note recently a structure-preserving method that can conserve the enclosed volume exactly for surface diffusion was presented in \cite{Zhao2021}, and the generalization to axisymmetric geometric equations was considered in \cite{Bao2022volume}. In addition, an energy-stable method for anisotropic surface diffusion under general anisotropies has been discussed in \cite{Li2020energy}.

\vspace{0.2cm}
\noindent {\bf  Example 2.}  We investigate the equilibrium shapes of SSD by using different $\theta_Y$ and surface energies. We consider the ``cusped'' surface energy defined in \eqref{eq:cuspgamma} with four different rotations: (1) $\gamma(\vec n)$ defined in \eqref{eq:cuspgamma} with $\delta=0.1$; (2) $\gamma\left(\bmath{R}_x(\pi/4)\,\vec n\right)$; (3) $\gamma\left(\bmath{R}_y(\pi/4)\,\vec n\right)$; (4) $\gamma\left(\bmath{R}_z(\pi/4)\,\vec n\right)$, where $\bmath{R}_{x}(\pi/4)$, $\bmath{R}_y(\pi/4)$ and $\bmath{R}_z(\pi/4)$ represent the orthogonal matrix for the rotation by an angle $\pi/4$ about the x,y,z-axis using the right-hand rule, respectively. The initial thin film is chosen as a $(3,3,1)$ cuboid island. The interface is partitioned into $N=5376$ triangles with $K=2737$ vertices, and $\tau=10^{-2}$.

As it can be seen from Fig.~\ref{fig:equilibrium}(a)-(c), $\theta_Y$ controls the equilibrium contact angle and thus the equilibrium shape. From Fig. \ref{fig:equilibrium}(d)-(f), we observe that a rotation of the ``cusped'' anisotropy will result in a corresponding rotation of the equilibrium shape.

\begin{figure}[!htp]
\centering
\includegraphics[width=0.9\textwidth]{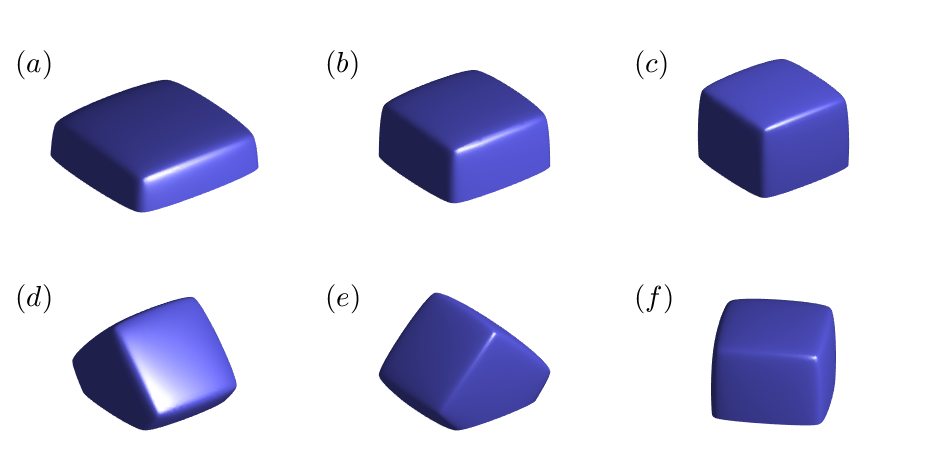}
\caption{The equilibrium profiles of the island film obtained using different $\theta_Y$ and anisotropies. For (a)-(c), we fix $\gamma(\vec n)$ in \eqref{eq:cuspgamma} with $\delta=0.1$, and $\theta_Y=\pi/3,~\pi/2,~2\pi/3$, respectively. For (d) -(f), we set $\theta_Y=2\pi/3$ but choose $\gamma\left(\bmath{R}_x(\pi/4)\,\vec n\right)$,  $\gamma\left(\bmath{R}_y(\pi/4)\,\vec n\right)$ and $\gamma\left(\bmath{R}_z(\pi/4)\,\vec n\right)$, respectively.}
\label{fig:equilibrium}
\end{figure}

\vspace{0.2cm}
\noindent {\bf  Example 3.}
We examine the geometric evolution of the square-ring patch under the ``cusped'' surface energies used in Fig.~\ref{fig:equilibrium}(c)-(f). The initial island is chosen as a $(12, 12, 1)$ cuboid by cutting out a (10, 10, 1) cuboid from the centre. The interface is partitioned into $N=33792$ triangles with $K=17248$ vertices, and we take $\tau = 2\times 10^{-4}$,  $\theta_Y=2\pi/3$.

\begin{figure}[!htp]
\centering
\includegraphics[width=0.85\textwidth]{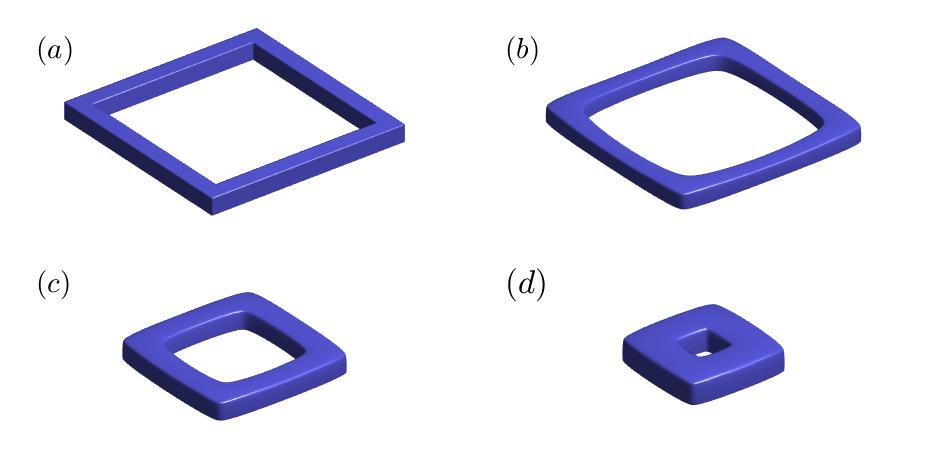}
\caption{Snapshots of the island film in the evolution of an initially square-ring patch with $\theta_Y=2\pi/3$. The surface energy is given by \eqref{eq:cuspgamma} with $\delta=0.1$. (a) $t=0$; (b) $t=1.0$; (c) $t=8.0$; (d) $t=12.6$.}
\label{fig:Ani}
\end{figure}

\begin{figure}[!htp]
\centering
\includegraphics[width=0.8\textwidth]{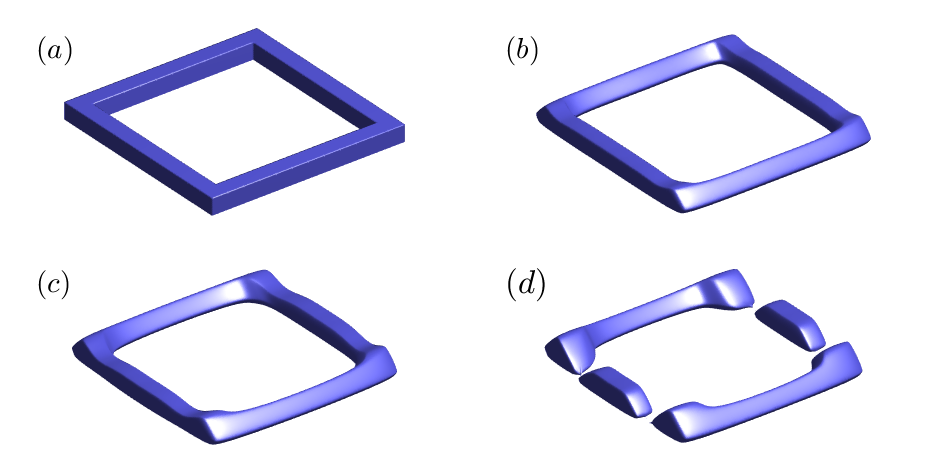}
\caption{Snapshots of the island film in the evolution of an initially square-ring patch with $\theta_Y=2\pi/3$. The surface energy is given by $\gamma(\bmath{R}_x(\pi/4)\,\vec n)$, where $\gamma(\vec n)$ is defined in \eqref{eq:cuspgamma} with $\delta=0.1$. (a) $t=0$; (b) $t=0.1$; (c) $t=0.5$; (d) $t=1.12$.}
\label{fig:Anix}
\end{figure}

\begin{figure}[!htp]
\centering
\includegraphics[width=0.8\textwidth]{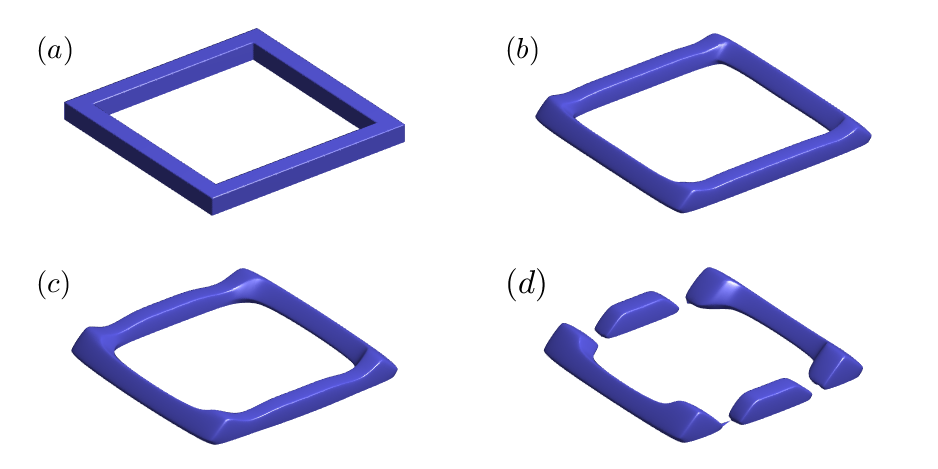}
\caption{Snapshots of the island film in the evolution of an initially square-ring patch with $\theta_Y=2\pi/3$. The surface energy is chosen as $\gamma(\bmath{R}_y(\pi/4)\,\vec n)$, where $\gamma(\vec n)$ is defined in \eqref{eq:cuspgamma} with $\delta=0.1$. (a) $t=0$; (b) $t=0.1$; (c) $t=0.5$; (d) $t=1.12$.}
\label{fig:Aniy}
\end{figure}

\begin{figure}[!htp]
\centering
\includegraphics[width=0.8\textwidth]{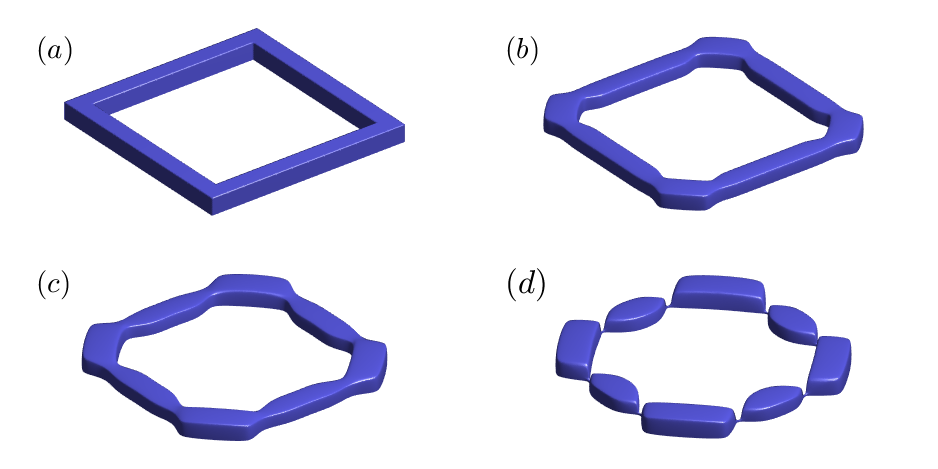}
\caption{Snapshots of the island film in the evolution of an initially square-ring patch with $\theta_Y=2\pi/3$. The surface energy is given by $\gamma(\bmath{R}_z(\pi/4)\,\vec n)$, where $\gamma(\vec n)$ is defined in \eqref{eq:cuspgamma} with $\delta=0.1$.  (a) $t=0$; (b) $t=0.1$; (c) $t=0.4$; (d) $t=0.76$.}
\label{fig:Aniz}
\end{figure}

The geometric evolutions for the square-ring island are shown in Fig.~\ref{fig:Ani} -\ref{fig:Aniz} for the four cases, respectively.  When the surface energy density is given by \eqref{eq:cuspgamma}, we observe that the thin square-ring patch gradually shrinks towards the centre to form a self-similar square-ring shape. When the orientation of the anisotropy is rotated with respect to $x$-axis by $\pi/4$, we observe from Fig.~\ref{fig:Anix} that a break-up of the island along the $y$-direction occures. Similarly, a rotation of the anisotropy with respect to $y$-axis results in the breakup of the island along $x$-axis, as expected in Fig.~\ref{fig:Aniy}. Furthermore, we observe that when the orientation of the anisotropy is rotated with respect to the $z$-axis, the square-ring  forms several isolated particles.  In these experiments, topological change event could occur when the island is breaking up into small particles. In this situation, we simply show the results before the blowup of the numerical solutions.

It is well-known in isotropic case, the Rayleigh-like instability in the azimuthal direction and the shrinking instability in the radial direction are competing with each other to determine the geometric evolution of a square-ring island~\cite{Jiang19b, Zhao19b}. Generally, a very thin square island always breaks up into isolated particles since the Rayleigh-like instability dominates the kinetic evolution; while for a fat square-ring island,  the shrinking instability dominates the evolution and makes the island shrink towards the center. Our numerical simulations indicate that anisotropic surface energies play an important role in the evolution of the square-ring island. They can either enhance or mitigate the Rayleigh-like instability in the azimuthal direction, depending on the crystalline alignments. 

\section{Conclusions}
\label{sec:con}
We developed an efficient, accurate, and energy-stable parametric finite element method (ES-PFEM) for solving the sharp-interface model of solid-state dewetting in both the isotropic case and the anisotropic case with anisotropic surface energies in the Riemannian metric form. By reformulating the relaxed contact angle condition as a time-dependent Robin-type of boundary condition for the interface, we obtained a new weak formulation. By using continuous piecewise linear elements in space and the backward Euler method in time, we then discretized the weak formulation to obtain the semi-implicit ES-PFEM. We proved the well-posedness and unconditional energy stability of the numerical method.

We assessed the accuracy and convergence of the proposed ES-PFEM and found that it can attain the second-order convergence rate of the spatial error for the dynamic interface and the first-order convergence rate of the contact angle for the equilibrium interface. Finally, we investigated the anisotropic effects on the evolution of large square islands and square-ring islands by using different anisotropic surface energies. In fact, the proposed ES-PFEM provides a nice tool for  simulating different applications in solid-state dewetting in three dimensions.

\begin{appendices}

\section{Anisotropic surface gradient}\label{sec:app1}
 Given a two-dimensional manifold $S$ and a smooth scalar field $f$, the surface gradient of $f$ over $S$ is defined as
\begin{align}
\label{eq:sfoperator}
\nabla_{s}f(\vec X)=\left(\mathbb{I}-\vec n\otimes\vec n\right)\nabla f(\vec X):=(\underline{D}_1f(\vec X),~\underline{D}_2f(\vec X),~\underline{D}_3f(\vec X))^T,\quad\vec X\in S,
\end{align}
where $\mathbb{I}\in\mathbb{R}^{3\times 3}$ is the identity matrix and $\vec n$ is the unit normal to $S$. 

Let $G\in\mathbb{R}^{3\times 3}$ be a symmetric positive definite (SPD) matrix, we follow the notations in \cite{Barrett08Ani} and define the anisotropic surface gradient associated with the SPD matrix $\widetilde{G} = ({\rm det}{G})^{-\frac{1}{2}} G$ as
\begin{align}
\label{eq:anigradient}
\nabla_{s}^{\widetilde{G}} f (\vec X)= \sum_{i=1}^2(\partial_{\vec t_i}f)(\vec X)\, \vec t_i,\qquad\vec X\in S,
\end{align}
where $\partial_{\vec t_i}f = \vec t_i\cdot(\nabla_{s}\,f)$ is the directional derivative,  and $\left\{\vec t_1,~\vec t_2\right\}$ is the orthogonal basis of the tangential space of $S$ with respect to $\widetilde{G}$ at the point of interest $\vec X$, i.e.,
\begin{align}
\label{eq:ortho}
\vec t_i \cdot(\widetilde{G}\,\vec t_j) = \delta_{ij},\quad \vec t_i\cdot\vec n(\vec X) = 0,\qquad i,j=1,2.
\end{align}
 Moreover, given a vector-valued smooth function $\bmath{g}$, the anisotropic surface divergence and anisotropic surface gradient are then defined respectively as
 \begin{subequations}
 \begin{align}
 (\nabla_{s}^{\widetilde{G}}\cdot\bmath{g})(\vec X) = \sum_{i=1}^2 (\partial_{\vec t_i}\bmath{g})(\vec X)\cdot({\widetilde{G}}\,\vec t_i),\\
 (\nabla_{s}^{\widetilde{G}}\,\bmath{g})(\vec X) = \sum_{i=1}^2(\partial_{\vec t_i}\,\bmath{g})(\vec X)\otimes(\widetilde{G}\vec t_i).
 \end{align}
 \end{subequations}
 
\section{Differential calculus}\label{sec:app2}

Given the surface energy density $\gamma(\vec n)$ in \eqref{eq:aniso} and the moving surface $S(t)$ with boundary $\Gamma(t)$, then we have the following equation hold (see Lemma 2.1 in \cite{Jiang19c}):
\begin{align}
\label{eq:anivar}
\frac{\rd }{\rd t}\int_{S(t)}\gamma(\vec n)\,\rd A = \int_{S(t)}\mu\,\vec n\cdot\bmath{v}\,\rd A +\int_{\Gamma(t)}\vec c_{_\Gamma}^\gamma\cdot\bmath{v}\,\rd s,
\end{align}
where $\mu$ is the chemical potential defined in \eqref{eq:animodel2} and $\bmath{v}$ is the velocity of $S(t)$.

 Besides, we have (see Lemma 3.2 in \cite{Barrett10cluster}):
\begin{align}
\label{eq:anirey}
\frac{\rd }{\rd t}\sum_{i=1}^L\int_{S(t)}\gamma_i(\vec n)\,\rd A:=\sum_{i=1}^L\int_{S(t)}\gamma_i(\vec n)\left(\nabla_{s}^{\widetilde{G}_i}\,\vec X,~\nabla_{s}^{\widetilde{G}_i}\bmath{v}\right)_{\widetilde{G}_i}\,\rd A,
\end{align}
where we define the inner product with respect to a particular matrix $\widetilde{G}$ for $\vec u,~\vec v$ via
\begin{align}
\label{eq:aniinner}
\Bigl(\nabla_{s}^{\widetilde{G}}\vec u,~\nabla_{s}^{\widetilde{G}}\vec v\Bigr)_{\widetilde{G}} = \sum_{i=1}^2\Bigl(\partial_{\vec t_{i}}\vec u,~\partial_{\vec t_{i}}\vec v\Bigr)_{\widetilde{G}} = \sum_{i=1}^2\partial_{\vec t_{i}}\vec u\cdot(\widetilde{G}\,\partial_{\vec t_{i}}\vec v),
\end{align}
with $\{\vec t_{1},~\vec t_2\}$ satisfying \eqref{eq:ortho}. In particular, when $L=1$ and $\widetilde{G}_1=G_1=\mathbb{I}$, Eq.~\eqref{eq:anirey} will reduce to the Reynolds transport theorem on $S(t)$
\begin{align}
\label{eq:isorey}
\frac{\rd }{\rd t}\left|S(t)\right| = \int_{S(t)}\nabla_{s}\vec X:\nabla_{s}\bmath{v}\,\dA.
\end{align}

Combining \eqref{eq:anivar} and \eqref{eq:anirey} yields
\begin{align}
\label{eq:muform2}
&\int_{S(t)}\mu\,\vec n\cdot\bmath{v}\,\rd A -\sum_{i=1}^L\int_{S(t)}\gamma_i(\vec n)\,\left(\nabla_{s}^{\widetilde{G}_i}\,\vec X,~\nabla_{s}^{\widetilde{G}_i}\bmath{v}\right)_{\widetilde{G}_i}\,\rd A+\int_{\Gamma(t)}\vec c_{_\Gamma}^\gamma\cdot\bmath{v}\,\rd s = 0.
\end{align}
When $L=1$ and $\widetilde{G}_1=G_1=\mathbb{I}$, Eq.~\eqref{eq:muform2} will reduce to
\begin{align}
\label{eq:Kformu}
\int_{S(t)} \mathcal{H}\,\vec n\cdot\bmath{v}\,\rd A-\int_{S(t)}\nabla_{s}\vec X:\nabla_{s}\bmath{v}\,\rd A
+\int_{\Gamma(t)}\vec c_{_\Gamma}\cdot\bmath{v}\,\rd s = 0.
\end{align}

\section{Relevant inequalities}

Given the surface energy density $\gamma(\vec n)$ in \eqref{eq:aniso}, and the polygonal surface $S^m:=\vec X^m(\cdot)\in\mathbb{X}^m$ in \eqref{eq:polygonalS}, where $\mathbb{X}^m$ is defined in \eqref{eqn:FEMspaces}. For $\vec Y\in\mathbb{X}^m$, we have (see Lemma 3.1 in \cite{Barrett08Ani}):
\begin{align}
\label{eq:asoieq}
\frac{1}{2}\int_{\sigma_j^m}\gamma_i(\vec n^m)\left(\nabla_{s}^{\widetilde{G}_i}\,\vec Y,~\nabla_{s}^{\widetilde{G}_i}\vec Y\right)_{\widetilde{G}_i}\,\rd A\geq \int_{\vec Y(\sigma_j^m)} \gamma_i(\vec n(\vec Y))\,\rd A,\quad 1\leq i\leq L,\quad 1\leq j\leq N,
\end{align}
and the equality holds when $\vec Y = \vec X^m$. When $\gamma_i\equiv 1$, i.e, $G_i=\widetilde{G}_i=\mathbb{I}$, we obtain for $1\leq j\leq N$
\begin{subequations}
\label{eqn:isoieq}
\begin{align}
&\frac{1}{2}\int_{\sigma_j^m}\nabla_s\vec Y:\nabla_s\vec Y\,\rd A\geq \int_{\vec Y(\sigma_j^m)}\,\rd A=|\vec Y(\sigma_j^m)|,\\
&\frac{1}{2}\int_{\sigma_j^m}\nabla_s\vec X^m:\nabla_s\vec X^m\,\rd A = \int_{\sigma_j^m}\,\rd A = |\sigma_j^m|.
\end{align}
\end{subequations}
\end{appendices}

\noindent
{\bf Acknowledgements.} The work of Bao was supported by Singapore MOE grant
MOE2019-T2-1-063 (R-146-000-296-112). The work of Zhao was supported by the Singapore MOE grant R-146-000-285-114.

\bibliographystyle{elsarticle-num}
\bibliography{thebib}

\begin{thebibliography}{10}
\expandafter\ifx\csname url\endcsname\relax
  \def\url#1{\texttt{#1}}\fi
\expandafter\ifx\csname urlprefix\endcsname\relax\def\urlprefix{URL }\fi
\expandafter\ifx\csname href\endcsname\relax
  \def\href#1#2{#2} \def\path#1{#1}\fi

\bibitem{Thompson12}
C.~V. Thompson, Solid-state dewetting of thin films, Annu. Rev. Mater. Res. 42
  (2012) 399--434.

\bibitem{Mizsei93}
J.~Mizsei, Activating technology of {SnO$_2$} layers by metal particles from
  ultrathin metal films, Sensor Actuat B-Chem. 16~(1) (1993) 328--333.

\bibitem{Rath07}
S.~Rath, M.~Heilig, H.~Port, J.~Wrachtrup, Periodic organic nanodot patterns
  for optical memory, Nano Lett. 7~(12) (2007) 3845--3848.

\bibitem{Randolph07}
S.~Randolph, J.~Fowlkes, A.~Melechko, K.~Klein, H.~Meyer~III, M.~Simpson,
  P.~Rack, Controlling thin film structure for the dewetting of catalyst
  nanoparticle arrays for subsequent carbon nanofiber growth, Nanotechnology
  18~(46) (2007) 465304.

\bibitem{Ye10a}
J.~Ye, C.~V. Thompson, Mechanisms of complex morphological evolution during
  solid-state dewetting of single-crystal nickel thin films, Appl. Phys. Lett.
  97~(7) (2010) 071904.

\bibitem{Ye11a}
J.~Ye, C.~V. Thompson, Anisotropic edge retraction and hole growth during
  solid-state dewetting of single crystal nickel thin films, Acta Mater. 59~(2)
  (2011) 582--589.

\bibitem{Amram12}
D.~Amram, L.~Klinger, E.~Rabkin, Anisotropic hole growth during solid-state
  dewetting of single-crystal {Au--Fe} thin films, Acta Mater. 60~(6-7) (2012)
  3047--3056.

\bibitem{Rabkin14}
E.~Rabkin, D.~Amram, E.~Alster, Solid state dewetting and stress relaxation in
  a thin single crystalline {Ni} film on sapphire, Acta Mater. 74 (2014)
  30--38.

\bibitem{Herz216}
A.~Herz, A.~Franz, F.~Theska, M.~Hentschel, T.~Kups, D.~Wang, P.~Schaaf,
  Solid-state dewetting of single-and bilayer {Au-W} thin films: Unraveling the
  role of individual layer thickness, stacking sequence and oxidation on
  morphology evolution, AIP Adv. 6~(3) (2016) 035109.

\bibitem{Naffouti16}
M.~Naffouti, T.~David, A.~Benkouider, L.~Favre, A.~Delobbe, A.~Ronda,
  I.~Berbezier, M.~Abbarchi, Templated solid-state dewetting of thin silicon
  films, Small 12~(44) (2016) 6115--6123.

\bibitem{Kovalenko17}
O.~Kovalenko, S.~Szab{\'o}, L.~Klinger, E.~Rabkin, Solid state dewetting of
  polycrystalline {Mo} film on sapphire, Acta Mater. 139 (2017) 51--61.

\bibitem{Jiang12}
W.~Jiang, W.~Bao, C.~V. Thompson, D.~J. Srolovitz, Phase field approach for
  simulating solid-state dewetting problems, Acta Mater. 60~(15) (2012)
  5578--5592.

\bibitem{Srolovitz86a}
D.~J. Srolovitz, S.~A. Safran, Capillary instabilities in thin films: {I.}
  {Energetics}, J. Appl. Phys. 60~(1) (1986) 247--254.

\bibitem{Wang15}
Y.~Wang, W.~Jiang, W.~Bao, D.~J. Srolovitz, Sharp interface model for
  solid-state dewetting problems with weakly anisotropic surface energies,
  Phys. Rev. B 91 (2015) 045303.

\bibitem{Jiang16}
W.~Jiang, Y.~Wang, Q.~Zhao, D.~J. Srolovitz, W.~Bao, Solid-state dewetting and
  island morphologies in strongly anisotropic materials, Scr. Mater. 115 (2016)
  123--127.

\bibitem{Bao17}
W.~Bao, W.~Jiang, Y.~Wang, Q.~Zhao, A parametric finite element method for
  solid-state dewetting problems with anisotropic surface energies, J. Comput.
  Phys. 330 (2017) 380--400.

\bibitem{Bao17b}
W.~Bao, W.~Jiang, D.~J. Srolovitz, Y.~Wang, Stable equilibria of anisotropic
  particles on substrates: a generalized {Winterbottom} construction, SIAM J.
  Appl. Math. 77~(6) (2017) 2093--2118.

\bibitem{Dornel06}
E.~Dornel, J.~Barbe, F.~De~Cr{\'e}cy, G.~Lacolle, J.~Eymery, Surface diffusion
  dewetting of thin solid films: Numerical method and application to
  {Si/SiO$_2$}, Phys. Rev. B 73~(11) (2006) 115427.

\bibitem{Wong00}
H.~Wong, P.~Voorhees, M.~Miksis, S.~Davis, Periodic mass shedding of a
  retracting solid film step, Acta Mater. 48~(8) (2000) 1719--1728.

\bibitem{Kim13}
G.~H. Kim, R.~V. Zucker, J.~Ye, W.~C. Carter, C.~V. Thompson, Quantitative
  analysis of anisotropic edge retraction by solid-state dewetting of thin
  single crystal films, J. Appl. Phys. 113~(4) (2013) 043512.

\bibitem{Kan05}
W.~Kan, H.~Wong, Fingering instability of a retracting solid film edge, J.
  Appl. Phys. 97~(4) (2005) 043515.

\bibitem{Mullins57}
W.~W. Mullins, Theory of thermal grooving, J. Appl. Phys. 28~(3) (1957)
  333--339.

\bibitem{Srolovitz86}
D.~J. Srolovitz, S.~A. Safran, Capillary instabilities in thin films: {II.}
  {Kinetics}, J. Appl. Phys. 60~(1) (1986) 255--260.

\bibitem{Jiang19c}
W.~Jiang, Q.~Zhao, W.~Bao, Sharp-interface model for simulating solid-state
  dewetting in three dimensions, SIAM J. Appl. Math. 80~(4) (2020) 1654--1677.

\bibitem{Dziwnik15b}
M.~Dziwnik, A.~M\"{u}nch, B.~Wagner, An anisotropic phase-field model for
  solid-state dewetting and its sharp-interface limit, Nonlinearity 30 (2017)
  1465--1496.

\bibitem{Naffouti17}
M.~Naffouti, R.~Backofen, M.~Salvalaglio, T.~Bottein, M.~Lodari, A.~Voigt,
  T.~David, A.~Benkouider, I.~Fraj, L.~Favre, et~al., Complex dewetting
  scenarios of ultrathin silicon films for large-scale nanoarchitectures, Sci.
  Adv. 3~(11) (2017) eaao1472.

\bibitem{Huang19b}
Q.-A. Huang, W.~Jiang, J.~Z. Yang, An efficient and unconditionally energy
  stable scheme for simulating solid-state dewetting of thin films with
  isotropic surface energy, Commu. Comput. Phys. 26 (2019) 1444--1470.

\bibitem{Carter95}
W.~C. Carter, A.~R. Roosen, J.~W. Cahn, J.~E. Taylor, Shape evolution by
  surface diffusion and surface attachment limited kinetics on completely
  faceted surfaces, Acta Metall. Mater. 43~(12) (1995) 4309--4323.

\bibitem{Zucker13}
R.~V. Zucker, G.~H. Kim, W.~C. Carter, C.~V. Thompson, A model for solid-state
  dewetting of a fully-faceted thin film, C. R. Physique 14~(7) (2013)
  564--577.

\bibitem{Pierre09b}
O.~Pierre-Louis, A.~Chame, Y.~Saito, Dewetting of ultrathin solid films, Phys.
  Rev. Lett. 103~(19) (2009) 195501.

\bibitem{Du10}
P.~Du, M.~Khenner, H.~Wong, A tangent-plane marker-particle method for the
  computation of three-dimensional solid surfaces evolving by surface diffusion
  on a substrate, J. Comput. Phys. 229~(3) (2010) 813--827.

\bibitem{Mayer01}
U.~F. Mayer, Numerical solutions for the surface diffusion flow in three space
  dimensions, Comput. Appl. Math. 20~(3) (2001) 361--379.

\bibitem{Bansch05}
E.~B{\"a}nsch, P.~Morin, R.~H. Nochetto, A finite element method for surface
  diffusion: the parametric case, J. Comput. Phys. 203~(1) (2005) 321--343.

\bibitem{Hausser07}
F.~Hausser, A.~Voigt, A discrete scheme for parametric anisotropic surface
  diffusion, J. Sci. Comput. 30~(2) (2007) 223--235.

\bibitem{Pozzi08}
P.~Pozzi, Anisotropic mean curvature flow for two-dimensional surfaces in
  higher codimension: a numerical scheme, Interface Free Bound. 10~(4) (2008)
  539--576.

\bibitem{Barrett08JCP}
J.~W. Barrett, H.~Garcke, R.~N{\"u}rnberg, On the parametric finite element
  approximation of evolving hypersurfaces in $\mathbb{R}^3$, J. Comput. Phys.
  227~(9) (2008) 4281--4307.

\bibitem{Barrett20}
J.~W. Barrett, H.~Garcke, R.~N{\"u}rnberg, Parametric finite element
  approximations of curvature driven interface evolutions, Handb. Numer. Anal.
  (Andrea Bonito and Ricardo H. Nochetto, eds.) 21 (2020) 275--423.

\bibitem{Barrett08Ani}
J.~W. Barrett, H.~Garcke, R.~N{\"u}rnberg, A variational formulation of
  anisotropic geometric evolution equations in higher dimensions, Numer. Math.
  109~(1) (2008) 1--44.

\bibitem{Barrett10cluster}
J.~W. Barrett, H.~Garcke, R.~N{\"u}rnberg, Parametric approximation of surface
  clusters driven by isotropic and anisotropic surface energies, Interface Free
  Bound. 12~(2) (2010) 187--234.

\bibitem{Barrett10}
J.~W. Barrett, H.~Garcke, R.~N{\"u}rnberg, Finite-element approximation of
  coupled surface and grain boundary motion with applications to thermal
  grooving and sintering, Eur. J. Appl. Math. 21~(6) (2010) 519--556.

\bibitem{Jiang19a}
W.~Jiang, Q.~Zhao, Sharp-interface approach for simulating solid-state
  dewetting in two dimensions: a {Cahn-Hoffman} $\boldsymbol{\xi}$-vector
  formulation, Physica D 390 (2019) 69--83.

\bibitem{Zhao19b}
Q.~Zhao, W.~Jiang, W.~Bao, A parametric finite element method for solid-state
  dewetting problems in three dimensions, SIAM J. Sci. Comput. 42~(1) (2020)
  B327--B352.

\bibitem{Zhao20}
Q.~Zhao, W.~Jiang, W.~Bao, An energy-stable parametric finite element method
  for simulating solid-state dewetting, IMA J. Numer. Anal. 41 (2021)
  2026--2055.

\bibitem{Hoffman72}
D.~W. Hoffman, J.~W. Cahn, A vector thermodynamics for anisotropic surfaces: I.
  {Fundamentals} and application to plane surface junctions, Surface Science 31
  (1972) 368--388.

\bibitem{Cahn74}
J.~W. Cahn, D.~W. Hoffman, A vector thermodynamics for anisotropic surfaces:
  {II. Curved} and faceted surfaces, Acta Metall. 22~(10) (1974) 1205--1214.

\bibitem{Zhao2021}
W.~Bao, Q.~Zhao, A structure-preserving parametric finite element method for
  surface diffusion, SIAM J. Numer. Anal. 59~(5) (2021) 2775--2799.

\bibitem{Bao2022volume}
W.~Bao, H.~Garcke, R.~N{\"u}rnberg, Q.~Zhao, Volume-preserving parametric
  finite element methods for axisymmetric geometric evolution equations, J.
  Comput. Phys. 460 (2022) 111180.

\bibitem{Li2020energy}
Y.~Li, W.~Bao, An energy-stable parametric finite element method for
  anisotropic surface diffusion, J. Comput. Phys. 446 (2021) 110658.

\bibitem{Jiang19b}
W.~Jiang, Q.~Zhao, T.~Qian, D.~J. Srolovitz, W.~Bao, Application of {Onsager's}
  variational principle to the dynamics of a solid toroidal island on a
  substrate, Acta Mater. 163 (2019) 154--160.

\end{thebibliography}

\end{document}